\documentclass[12pt]{article}

% Set page size and margins
% Replace `letterpaper' with `a4paper' for UK/EU standard size
\usepackage[margin = 1in]{geometry}
\usepackage{setspace}

% Useful packages
\usepackage[T1]{fontenc}
\usepackage{charter} %font
\usepackage{euler} %mathfont
\usepackage{float}
\usepackage{soul} % has \ul for underlining
\usepackage{bbm} % has \mathbbm{1} for indicators
\usepackage{verbatim}
\usepackage{amsmath}
\usepackage{amsthm}
\usepackage{amssymb}
\usepackage{textcomp}
\usepackage{graphicx}
\usepackage[english]{babel}
\usepackage{tikz}
\usepackage[linguistics]{forest}
\usepackage{dblfloatfix}
\usepackage{csquotes}
\usepackage{hyperref}
\usepackage{nameref}
\usepackage{theoremref}
\usepackage{natbib}
\usepackage{comment}

%enumeration modification from enumerate package%
\usepackage{enumerate}

\allowdisplaybreaks %lets math break between pages

% writing algs
\usepackage{algorithm}
\usepackage{algpseudocode}

% formatting for theorems/lemmas
\newtheorem{theorem}{Theorem}[section]

\newtheorem{lemma}[theorem]{Lemma}

\newtheorem*{remark}{Remark}
\newtheorem*{theorem*}{Theorem}

%Other Formatting Nonsense

%Playing with section titles
%\usepackage[sc,center]{titlesec}
\usepackage{titlesec}
\titleformat{\section}{\sc\Large}{\thesection}{1em}{}
\titleformat{\subsection}{\sc\large}{\thesubsection}{1em}{}

%Boxes around theorems
\usepackage{tcolorbox}{}
\tcbuselibrary{theorems}

\newtcbtheorem[number within=section]{Theorem}{Theorem}{
  colback=white!75!gray,
  colframe=gray!75!black,
  fonttitle=\bfseries,
}{thm}
\newcounter{Theorem}
\setcounter{Theorem}{0}
\numberwithin{Theorem}{section} 

\newtcbtheorem{Remark}{Remark}{
  colback=white!75!gray,
  colframe=gray!75!black,
  fonttitle=\bfseries,
}{rem}

\newtcbtheorem{Algorithm}{Algorithm}{
  colback=white!75!gray,
  colframe=gray!75!black,
  fonttitle=\bfseries,
}{rem}

% new commands for ease of use
\newcommand{\1}{\mathbf{1}} %Indicator
 %Probability
\newcommand{\prob}{\mathbf{P}} %Probability
\newcommand{\ex}{\mathbf{E}} \newcommand{\EXP}{\mathbf{E}} %Expected value

 %Natural numbers
\newcommand{\R}{\mathbbm{R}} %Real numbers
 %Complex numbers
 %Rational numbers
 % infinitely often
 %almost surely
 %identity
 %lcm
\newcommand{\bin}{\ensuremath{\operatorname{Binomial}}} %binomial
\newcommand{\ber}{\ensuremath{\operatorname{Bernoulli}}} %bernouilli
\newcommand{\unif}{\ensuremath{\operatorname{Uniform}}} %uniform
\newcommand{\exponential}{\ensuremath{\operatorname{Exponential}}} %exponential
\newcommand{\poisson}{\ensuremath{\operatorname{Poisson}}} %exponential
 %argmin
 %argmax
\newcommand{\convdist}{\ensuremath{\xrightarrow[]{\mathcal{L}}}} %convergence in distribution
\newcommand{\convprob}{\ensuremath{\xrightarrow[]{\mathcal{\mathbb{P}}}}} %convergence in distribution
\newcommand{\var}{\ensuremath{\operatorname{Var}}} %Variance
\newcommand{\dist}{\ensuremath{\overset{\mathcal{L}}{=}}} %distibutional identity

\newcommand{\cE}{\mathcal{E}}

\title{\bf{Uniform temporal trees}
  \thanks{Luc Devroye acknowledges the support of NSERC grant A3456.
    G\'abor Lugosi acknowledges the support of Ayudas Fundación BBVA a
Proyectos de Investigación Científica 2021 and
the Spanish Ministry of Economy and Competitiveness grant PID2022-138268NB-I00, financed by MCIN/AEI/10.13039/501100011033,
FSE+MTM2015-67304-P, and FEDER, EU.).}
}

\author{
\bf{Caelan Atamanchuk} \\
Department of Mathematics and Statistics \\
McGill University \\
caelan.atamanchuk@gmail.com
\and
\bf{Luc Devroye} \\
School of Computer Science \\
McGill University, \\
lucdevroye@gmail.com
\and
\bf{G\'{a}bor Lugosi} \\
Department of Economics and Business, \\
Pompeu  Fabra University, Barcelona, Spain \\
ICREA, Pg. Lluís Companys 23, 08010 Barcelona, Spain \\
Barcelona Graduate School of Economics \\
gabor.lugosi@gmail.com
}

\begin{document}

\setstretch{1}

\maketitle

\begin{abstract}
Motivated by the study of random temporal networks, we introduce a class of random trees that we coin \emph{uniform temporal trees}.
A uniform temporal tree is obtained by assigning independent uniform $[0,1]$ labels to the edges of a rooted complete infinite  $n$-ary tree and keeping only those vertices for which the path from the root to the vertex has decreasing edge labels.
 The $p$-percolated uniform temporal tree, denoted by $\mathcal{T}_{n,p}$, is obtained 
similarly, with the additional constraint that the edge labels on
each path are all below $p$. 
We study several properties of these trees, including their size, height, the typical depth of a vertex, and degree distribution. In particular, we establish a limit law for the size of $\mathcal{T}_{n,p}$ which states that $\frac{|\mathcal{T}_{n,p}|}{e^{np}}$ converges in distribution to an $\exponential(1)$ random variable as $n \to \infty$. For the height $H_{n,p}$, we prove that $\frac{H_{n,p}}{np}$ converges  to $e$ in probability. Uniform temporal trees show some remarkable similarities to uniform random recursive trees. 
\end{abstract}

\setstretch{1.3}

\tableofcontents

\begin{figure}[H]
    \centering
    \includegraphics[scale=0.5]{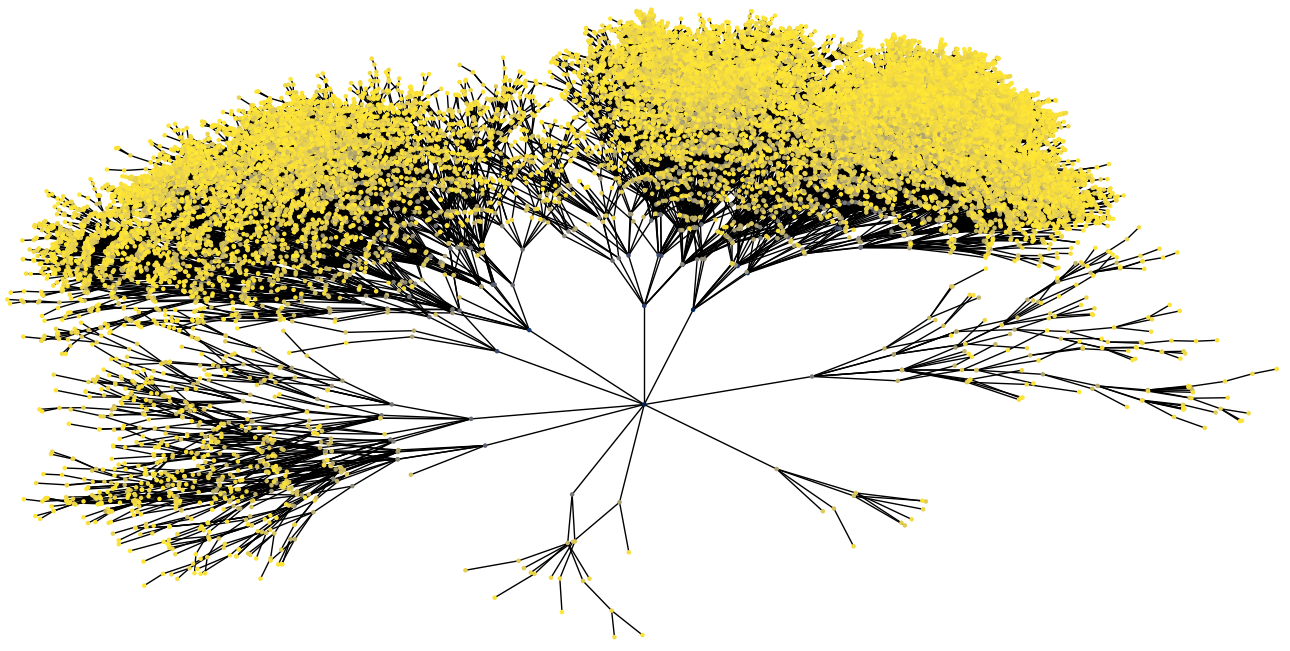}
    \caption{A uniform temporal tree with $n=10$.}
    \label{fig:pic}
\end{figure}

\newpage

\section{Introduction}

In network science, the graph modeling the network is often equipped with edge labels representing a time stamp. For example, in a network describing human interactions, the network's vertices represent individuals, edges stand for encounters, and the edges may be labeled by the time the encounter happens. Such \emph{temporal} networks allow one to study the spread of an infection or information (see \cite{holme2012}, \cite{holme2013}, \cite{holme2019}, \cite{hosseinzadeh2022}, \cite{kumar2020}).

A simple mathematical model for temporal networks that has been gaining attention
is \emph{random temporal graphs}. In this model, the time stamps are obtained by assigning a uniform random permutation to the edge set. If one is only interested in the ordering of the edge labels, equivalently, every edge of a graph is assigned an independent random label, uniformly distributed in $[0,1]$. 
In particular, 
the \emph{random simple temporal graph} model is obtained by adding such labels to the edges of an Erd\H{o}s-R\'{e}nyi random graph $G_{n,p}$. Random simple temporal graphs exhibit some remarkable phase transitions(see, e.g., \cite{angel2020,mertzios2024,becker2022,broutin2023,casteigts2024,atamanchuk2024size}).

Since (sparse) Erd\H{o}s-R\'{e}nyi random graphs are locally tree-like, it is natural to study analogous random trees. This motivates our definition of a \emph{uniform temporal tree}, specified below. The paper's main goal is to study the basic properties of such random trees, including their size, height, and degree distribution. 

The definition is the following.
For a positive integer $n$, let
$T_{n}$ be a rooted infinite complete $n$-ary tree (i.e., the root vertex has degree $n$ and every other vertex has degree $n+1$). To each edge $e$ of $T_n$, assign an independent
random variable $U_e$, uniformly distributed in $[0,1]$.
$U_e$ is called the \emph{label} of the edge $e$. 
A path between the root and a vertex $v$ is called \emph{decreasing} if the edge labels in the path appear in decreasing order. 
Sometimes it is more convenient 
to assign labels to vertices. 
The label $\ell_v$ of a vertex $v \in T_n$ is the label $U_e$ of 
the edge $e$ connecting $v$ to its parent (the parent of a vertex 
$v$ is the vertex adjacent to $v$ that is on the path between the root and $v$). In some cases we focus on the vertex labels, calling a path decreasing if the vertex labelling is decreasing, though this is an equivalent definition. Note that the root vertex does not have a parent or a label.
 The ($p$-percolated) uniform temporal tree, $\mathcal{T}_{n,p}$, is a random tree obtained from $T_{n}$ by assigning the root the label $p$ and deleting all vertices whose path from the root is not decreasing with respect to the vertex labelling. Note that every vertex in a $\mathcal{T}_{n,p}$ has a label at most $p$.
 %As is often done when discussing Erd\H{o}s-R\'{e}nyi random graphs, we often abuse notation and refer to a random variable distributed as a $\mathcal{T}_{n,p}$ as just "a $\mathcal{T}_{n,p}$". 
 When $p=1$, we simplify the notation and just write $\mathcal{T}_n$ for $\mathcal{T}_{n,1}$. 
 %For a $\mathcal{T}_{n,p}$ we shall give the root the label $p$.

It is clear that $\mathcal{T}_n$ (and therefore $\mathcal{T}_{n,p}$) is almost surely finite. Indeed, the probability that $\mathcal{T}_{n}$ has a vertex at depth $k$ is at most $n^k/k!$ which goes to zero as $k\to \infty$.

In the next section, we present the main results of the paper concerning the distribution of the size of $\mathcal{T}_{n,p}$, the typical depth of a vertex, the height (i.e., the depth of the deepest vertex), and the degree distribution. The proof of these results is given in subsequent sections.

%One can easily show using only properties of uniform random variables that if one were to take a Galton-Watson tree with child distribution $\bin(n,p)$, apply the uniform labels on the edges with the root getting the label $1$, and delete non-decreasing paths, then the resulting tree would share the same distribution as a $\mathcal{T}_{n,p}$. It is more convenient to study the model with the perspective from above, though we note this equivalence as a Galton-Watson tree is potentially more natural upon first thought. Additionally, this suggests the interesting question of whether one could derive similar properties to the ones presented here for the case of Galton-Watson trees with a more general child distribution.

\section{Results}

Before presenting the main results, we fix some terminology and notation. Let $T$ be a rooted tree. $P(v)$ for any $v \in T$ (except for the root) denotes the unique path between the root and $v$. The set $P^-(v)$ is $P(v)$ with the root removed. The \emph{depth} of $v$, $|v|$, is the number of edges in $P(v)$. The \emph{parent} of $v$, $p(v)$ is the single neighbour of $v$ in $P(v)$, and the set of \emph{children} of $v$, $C(v)$, contains all vertices at depth $|v|+1$ that are adjacent to $v$. 
The \emph{out-degree} of a vertex $v$ is the number $|C(v)|$ of its children. Two vertices $u$ and $v$ are \emph{siblings} if $p(u) = p(v)$. For a vertex $v \in T$, $T(v)$ is the subtree of $T$ rooted at $v$ containing all \emph{descendants} of $v$ in $T$, i.e., the tree containing $v$, its children, grandchildren, and so on.
%Often throughout the paper random variables with an $\exponential(1)$ distribution arise as limits, we denote these by $E$. 
Convergence in distribution for a sequence of random variables is denoted by $\convdist$ and $\dist$ is used for equality in distribution. Finally, we let $\convprob$ represent convergence in probability for a sequence of random variables.

The first result concerns the size of $\mathcal{T}_{n,p}$.
It is not difficult to see that the expected size equals $e^{np}$. However, the size
does not concentrate around the mean. We show that it admits a limit law, in the sense that the size divided by its expectation converges in distribution to an exponential random variable.
Moreover, we establish a joint limit law for the 
"distribution of mass" at the root, that is, for the sizes of the subtrees of children of the root with the largest labels.

\begin{theorem}\label{sizethm}
 Let $p\in (0,1]$ and consider a percolated uniform temporal tree. Then 
 \[
 \EXP |\mathcal{T}_{n,p}| = e^{np}
 \]
 and
     $$\frac{|\mathcal{T}_{n,p}|}{e^{np}} \convdist E \ \text{ as }  n \to \infty~,$$
where $E$ is an exponential$(1)$ random variable.

Moreover, for $1 \leq i \leq n$, let $v_i$ be the child of the root with the $i$-th largest label. Then
for any fixed $m\ge 1$,
$$\left(\frac{|\mathcal{T}_{n,p}(v_1)|}{e^{np}}, \ldots , \frac{|\mathcal{T}_{n,p}(v_m)|}{e^{np}}\right) \convdist \left( E_1U_1 , E_2U_1U_2 , \ldots , E_mU_1 \cdots U_m \right) \ \text{ as } n \to \infty~,$$
where $(E_k)_{k \geq 0}$ is a sequence of independent $\exponential(1)$ random variables and $(U_k)_{k \geq 0}$ is an independent sequence of independent uniform random variables on $[0,1]$.       
\end{theorem}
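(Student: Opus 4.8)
The plan is to set up a recursive description of $\mathcal{T}_{n,p}$ that exposes a self-similar structure, and then run a moment or martingale argument through it. The starting observation is that the root has $n$ children in the ambient tree $T_n$, with i.i.d.\ uniform labels; keeping only those below $p$, and ordering them decreasingly as $p > \ell_{v_1} > \ell_{v_2} > \cdots$, the subtree hanging off $v_i$ is itself distributed as $\mathcal{T}_{n,\ell_{v_i}}$, conditionally independent given the labels. Equivalently, writing $L_i = \ell_{v_i}/p$, the $L_i$ are the order statistics (in decreasing order) of $n$ i.i.d.\ uniforms on $[0,1]$ conditioned/thinned appropriately, and $|\mathcal{T}_{n,p}(v_i)| \overset{\mathcal L}{=} |\mathcal{T}_{n,\,pL_i}|$. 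Since $\EXP|\mathcal{T}_{n,q}| = e^{nq}$, one gets $\EXP\big[\, |\mathcal{T}_{n,p}| \,\big|\, (L_i) \,\big] = 1 + \sum_i e^{npL_i}$, which already exhibits why concentration fails: the largest label $L_1$ is $1 - \Theta(1/n)$, so $e^{npL_1}$ is a random constant multiple of $e^{np}$.

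The key reduction is to understand the joint law of the top labels. I would show that $\big(n(1-L_1),\, n(L_1 - L_2),\, n(L_2-L_3),\dots\big)$ converges to i.i.d.\ $\exponential(p^{-1})$-type spacings — more precisely, that $npL_1, np(L_1-L_2),\dots$ converge jointly to independent exponentials — which is the standard Rényi representation for the top order statistics of a uniform sample, combined with the observation that the thinning by $p$ only rescales. Then $e^{npL_k} = e^{np}\cdot e^{-np(1-L_k)} = e^{np}\prod_{j\le k} e^{-np(L_{j-1}-L_j)}$ (with $L_0 := 1$), and each factor $e^{-np(L_{j-1}-L_j)}$ converges to $e^{-S_j}$ where $S_j$ is exponential; but $e^{-S_j}$ is exactly uniform on $[0,1]$. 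This is the source of the $U_1\cdots U_k$ products in the statement. So $|\mathcal{T}_{n,p}(v_k)|/e^{np}$ should behave like (its conditional mean) $\times$ (a fluctuation term), i.e.\ like $U_1\cdots U_k \times E_k$, and summing over $k$ gives $|\mathcal{T}_{n,p}|/e^{np} \to \sum_k U_1\cdots U_k E_k$, which one checks is itself $\exponential(1)$.

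Concretely I would proceed as follows. First, prove $\EXP|\mathcal{T}_{n,p}| = e^{np}$ by linearity of expectation: a vertex at depth $k$ survives iff its $k$ edge labels are decreasing and all below $p$, an event of probability $p^k/k!$, and there are $n^k$ such vertices, so $\EXP|\mathcal{T}_{n,p}| = \sum_{k\ge 0} n^k p^k / k! = e^{np}$. Second, establish the Rényi-type joint convergence of the rescaled top labels $np(L_0 - L_1), np(L_1-L_2),\ldots$ to independent $\exponential(1)$'s. Third, prove a conditional law of large numbers: given the labels, $|\mathcal{T}_{n,p}(v_k)| / \EXP[\,|\mathcal{T}_{n,p}(v_k)| \mid \text{labels}\,] \to E_k$ in distribution, with the $E_k$ independent — here the natural route is the method of moments (show the joint moments of $|\mathcal{T}_{n,p}(v_k)|/e^{np}$ converge to those of $(U_1\cdots U_k E_k)_k$), or alternatively an inductive argument on the recursive structure using that a subtree $\mathcal{T}_{n,q}$ with $nq\to\infty$ satisfies the desired exponential limit (a self-referential but tractable induction, since the children of $v_k$ see effective parameter $pL_k \cdot(\text{spacing})$ that is a factor $1-\Theta(1/n)$ smaller). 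Fourth, assemble: combine the label convergence with the conditional limit via a subsequence/Skorokhod or characteristic-function argument to get the joint convergence of $(|\mathcal{T}_{n,p}(v_i)|/e^{np})_{i\le m}$, and finally sum (using that the tail contribution from $k > m$ is negligible because $U_1\cdots U_k \to 0$ geometrically) to recover the marginal law of $|\mathcal{T}_{n,p}|/e^{np}$ and verify $\sum_{k\ge 1} U_1\cdots U_k E_k \overset{\mathcal L}{=} E$ (e.g.\ by computing its Laplace transform or moments, or by the identity $X \overset{\mathcal L}{=} U_1(E_1 + X')$ with $X'$ an independent copy).

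The main obstacle will be the conditional limit law in the third step: showing that the (suitably normalized) size of a temporal tree converges to an exponential, uniformly enough in the parameter $q$ that one can feed the random parameters $pL_k$ back in. A clean way to handle this is via moments — one must show $\EXP\big[(|\mathcal{T}_{n,q}|/e^{nq})^r\big] \to r!$ for each fixed $r$, with error bounds uniform over $q$ bounded away from $0$, which amounts to controlling the combinatorics of $r$-tuples of surviving vertices (their most recent common ancestors and the label constraints along the merged paths); the dominant contribution comes from $r$-tuples whose pairwise meeting points are all at the root or near it, producing the factorial. Propagating these moment estimates through the recursion, and checking that the lower-order terms and the $q$-dependence are negligible, is where the real work lies; everything else is either a direct expectation computation or a standard order-statistics fact.
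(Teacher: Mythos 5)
Your plan splits into two parts, and it is worth separating them. Your treatment of the joint statement about the subtrees at the root --- Rényi/spacings asymptotics for the top child labels $n(p-\ell_{v_1}), n(\ell_{v_1}-\ell_{v_2}),\ldots$, conditional independence of the subtrees given those labels, the marginal limit law applied at the random parameters $\ell_{v_i}$, Slutsky, and $e^{-E}\dist \unif[0,1]$ --- is essentially the paper's own argument (and your verification that $\sum_k E_kU_1\cdots U_k$ is exponential matches the remark after Theorem \ref{sizethm}). Your route to the first statement, however, is genuinely different from the paper's: you propose to prove $\ex\left[\left(|\mathcal{T}_{n,q}|/e^{nq}\right)^r\right]\to r!$ for every $r$, uniformly for $q$ near $p$, and conclude by moment-determinacy of the exponential. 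The paper never computes higher moments: it only proves the crude bound $\ex|\mathcal{T}_{n,p}|^2\le 5e^{2np}$ (Lemma \ref{tightness}), then decomposes $\mathcal{T}_{n,p}$ along vertices of bounded index into many conditionally independent subtrees whose parameters are the values of a branching random walk with step $E/n$ (Lemma \ref{branchrandwalk}, via the uniform spacings coupling), applies Chebyshev conditionally so that the normalized size tracks the conditional mean $\sum_i 2e^{-Q_i}$, and identifies the limit of that martingale as $E/2$ through the moment recursion $a_k=\frac{1}{k+1}\sum_i\binom{k}{i}a_ia_{k-i}$ (Lemma \ref{finalcomputation}). Your architecture is not circular --- if the uniform moment asymptotics held, the first statement would follow directly and the root decomposition would only be needed for the joint law --- so in principle the method-of-moments route buys a more self-contained argument at the price of much heavier combinatorics.

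The gap is that this pivotal moment computation is asserted, not proved, and the heuristic offered for it is too coarse. The statement that ``the dominant contribution comes from $r$-tuples whose pairwise meeting points are all at the root or near it'' is misleading even for $r=2$: pairs of surviving vertices whose paths separate at depth $k$ contribute on the order of $2^{-k}e^{2np}$ for \emph{every} fixed $k$, and only after summing this geometric series over all depths does one get $\ex|\mathcal{T}_{n,p}|^2\sim 2e^{2np}$; the paper's appendix, which stops at the upper bound $5e^{2np}$, already needs the interleaving (linear-extension) factors $\binom{\ell-k+2m}{m}$ and a nontrivial resummation. For general $r$ you would have to sum, over all shapes of the spanning subtree of an $r$-tuple in $T_n$, the number of linear extensions of the associated poset divided by the factorial of its size, show the total is $(r!+o(1))e^{rnp}$, and obtain error bounds uniform in the parameter $q$ in a shrinking neighbourhood of $p$ (since you feed in the random labels $\ell_{v_i}=p-\Theta(1/n)$). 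None of this is set up in the proposal, and it is exactly the work the paper's branching-random-walk decomposition is designed to avoid. The alternative you mention in passing --- exploiting the distributional fixed point $X\dist U(E_1+X')$ through the recursive structure --- likewise needs a tightness and uniqueness argument that is not supplied. (A minor slip: $npL_1=n\ell_{v_1}\to\infty$, so it is $np(L_0-L_1)$, not $npL_1$, that converges to an exponential; your later formula has it right.) As written, then, the proposal is a plausible but incomplete program: the step carrying the entire analytic content of the theorem is a placeholder.
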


\begin{remark}
It follows from Theorem \ref{sizethm} that if $U_1,U_2,\ldots$ are independent uniform $[0,1]$ and $E_1,E_2,\ldots$ are independent
exponential$(1)$ random variables, then
$$
\sum_{i=1}^\infty E_i\prod_{j=1}^i U_j
$$
is an exponential$(1)$ random variable. This identity may also be checked directly.
\end{remark}

The proof of Theorem \ref{sizethm} is given in Section \ref{size}. The proof relies on the theory of branching random walks. Two key ingredients of the argument are the following:

\begin{enumerate}
    \item For any $p \in (0,1]$, $\ex|\mathcal{T}_{n,p}|^2 = O((\ex|\mathcal{T}_{n,p}|)^2)$.     
    \item Any $\mathcal{T}_{n,p}$ can be decomposed, for any $L \geq 1$, into a collection of smaller trees $(\mathcal{T}_{n,p_i})_{i=1}^{2^L}$, where the vector $(p_1,\ldots,p_{2^L})$ has a distribution that can be described using the values of vertices in the $L$-th generation of a branching random walk.
\end{enumerate}

%Once these two facts are verified, the proof only requires a pre-existing theory for branching random walks. In addition to the size we also study the height of a $\mathcal{T}_{n,p}$, and despite the lack of concentration of the size, the height is asymptotic to its mean. 

The next result concerns the \emph{height} $H_{n,p}$ of a uniform temporal tree, that is, the maximum vertex depth in $\mathcal{T}_{n,p}$. The following theorem, proved in Section \ref{sec:height}, states that the height is about $e$ times the logarithm of the size of the tree. This property is reminiscent of uniform random recursive trees (see \cite{Devroye1987}).

\begin{theorem}\label{heightthm}
    Fix $p \in (0,1]$, and let $H_{n,p}$ denote the height of a percolated uniform temporal tree $\mathcal{T}_{n,p}$. Then
    
        $$\frac{H_{n,p}}{np} \convprob e
        \ \ \text{and} \ \
        \frac{H_{n,p}}{\log|\mathcal{T}_{n,p}|} \convprob e
        \ \text{ as }  n \to \infty.$$
\end{theorem}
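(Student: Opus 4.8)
The plan is to prove the first convergence, $H_{n,p}/(np) \convprob e$, and then derive the second from it together with Theorem \ref{sizethm}, which gives $\log|\mathcal{T}_{n,p}| = np + o_{\prob}(np)$ (since $|\mathcal{T}_{n,p}|/e^{np}$ converges in distribution to a strictly positive random variable, its logarithm divided by $np$ tends to $1$ in probability). Combining this with $H_{n,p}/(np)\convprob e$ yields $H_{n,p}/\log|\mathcal{T}_{n,p}|\convprob e$ by Slutsky. So the heart of the matter is the first statement, which splits into an upper bound $H_{n,p} \le (e+\varepsilon)np$ and a lower bound $H_{n,p} \ge (e-\varepsilon)np$, each holding with probability tending to $1$.

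For the \emph{upper bound} I would use a first-moment / union-bound argument over the underlying infinite $n$-ary tree $T_n$. A vertex $v$ at depth $k$ survives in $\mathcal{T}_{n,p}$ precisely when the $k$ edge labels on $P(v)$ are decreasing and all lie below $p$; the probability of this is $p^k/k!$, and there are $n^k$ candidate vertices at depth $k$, so $\ex \mathcal{Z}_{k,p} = (np)^k/k!$. By Markov's inequality, $\prob(H_{n,p}\ge k) \le (np)^k/k!$, and by Stirling this is $o(1)$ as soon as $k \ge (e+\varepsilon)np$ (there $(np)^k/k! \approx (enp/k)^k/\sqrt{2\pi k} \le ((e)/(e+\varepsilon))^k \to 0$). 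This gives the upper bound directly.

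The \emph{lower bound} is the main obstacle, since a pure second-moment bound on $\mathcal{Z}_{k,p}$ at depth $k \approx (e-\varepsilon)np$ typically fails — the expectation is exponentially large but dominated by rare branches, so $\mathcal{Z}_{k,p}$ does not concentrate and $\prob(\mathcal{Z}_{k,p}>0)$ is not close to $1$ via Paley--Zygmund at a single depth. The standard remedy, which matches the paper's announced strategy of leaning on branching random walk theory, is to reinterpret the labels multiplicatively: along a root-to-vertex path $v_0,v_1,\dots$, record the gaps, and note that the decreasing-labels-below-$p$ condition corresponds to a point process whose log-intensities form a branching random walk. Concretely, conditionally on survival the successive label values $p > \ell_{v_1} > \ell_{v_2} > \cdots$ behave like a decreasing sequence obtained from a Poisson-type process, and the number of vertices at depth $k$ with label near a target value $t$ has exponential growth rate governed by the rate function of this branching random walk; optimizing over $t$ gives growth rate $\log n$ per level up to depth $\approx enp$. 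To make the lower bound rigorous I would (i) restrict attention to a sub-branching-process that only keeps children whose label lies in a favorable window, chosen so that the restricted process has mean offspring number $>1$ per level and survives forever with positive probability, then (ii) boost ``positive probability'' to ``probability $\to 1$'' by running this argument independently on the $n$ subtrees hanging off the root (or off a bounded-depth initial segment), whose survival events are independent, so that at least one succeeds with probability $1-o(1)$; finally (iii) verify that the favorable window can be taken so that the surviving branch reaches depth $(e-\varepsilon)np$. Alternatively, one can invoke a known first-order result for the maximal displacement in branching random walks (the rightmost particle grows linearly with an explicit speed), applied to the BRW of negative log-labels, and translate the speed into the constant $e$; I expect the paper to take essentially this route, so in the write-up I would set up the correspondence carefully and cite the relevant BRW speed theorem, with the $n$-fold independence at the root providing the high-probability upgrade.
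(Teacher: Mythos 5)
Your upper bound (first moment of $\mathcal{Z}_{k,p}$ plus Stirling) and your reduction of the second limit to the first via Theorem \ref{sizethm} are exactly the paper's steps, and your boosting idea (independent subtrees near the top of the tree, each succeeding with probability bounded away from zero) is a legitimate variant of the paper's amplification over the $K^M$ i.i.d.\ subtrees at a fixed depth $M$. The genuine gap is in your main route to the lower bound, steps (i)--(iii). A restricted process that at each generation keeps only children whose gap from the parent lies in a fixed window $[0,a/n]$, tuned so the mean offspring $a$ exceeds $1$, certifies growth but not displacement: survival produces an infinite ray of kept vertices, but along such a ray the cumulative label decrease is of order $a/n$ per level (typical kept gaps are spread over the window), so the budget $p$ is exhausted after $\Theta(np)$ levels, with the constant at most $2$, never $e$. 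Vertices at depth $(e-\varepsilon)np$ do exist in expectation even inside the windowed tree, but they correspond to rays whose gap sums are atypically small, so proving their existence is again a minimal-displacement (first-birth) problem that mere supercriticality and survival do not touch. This is precisely why the paper does not argue generation by generation: it embeds a Galton--Watson process over blocks of $L$ generations of a trimmed $K$-ary tree and uses the Cram\'er estimate of Lemma \ref{cramer} for $\prob(G_{Y_1}+\cdots+G_{Y_L}\le Lc)$, which makes the block mean offspring $(ce)^L e^{-L\phi(K)-o_L(L)}>1$ exactly when $c$ can be taken just above $1/e$; the constant $e$ is produced by this large-deviation computation and by nothing less.

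Your fallback suggestion, citing a known speed theorem for the extremal particle of a branching random walk, is the right circle of ideas, but as stated it glosses over where the work lies. The relevant walk is the additive walk of the displacements $p-\ell_v$, whose rank-$i$ child increment is a sum of $i$ uniform spacings, roughly $\mathrm{Gamma}(i)/n$ (not a walk of negative log-labels, which would not have i.i.d.\ increments), and both its branching factor and its step sizes depend on $n$, so a fixed-BRW speed theorem does not apply off the shelf. Most of Section \ref{sec:height} of the paper is devoted to removing this dependence: trimming to the $K$ highest-labelled children, replacing the normalized spacings by genuine exponentials via a Bernoulli$(p_n)$ device, checking that the displacement accumulated in the first $M$ levels is negligible, and only then running the block Galton--Watson argument in the spirit of Biggins. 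So the skeleton of your proposal matches the paper, but the quantitative ingredient that actually yields $e$ --- the block-level large-deviation estimate, or an honest invocation of the first-birth theorem after the $n$-dependence has been handled --- is missing from the argument you propose to write.
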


Note that the second statement follows from the first since Theorem \ref{sizethm} implies that
$(\log|\mathcal{T}_{n,p}|)/(np) \to 1$ in probability.
The proof of this theorem is based on a connection between uniform temporal trees and branching random walks that is similar to the one used for Theorem \ref{sizethm}.

The next property establishes the typical depth of a vertex in a uniform temporal tree. Just like in a uniform random recursive tree, the depth is concentrated around the natural logarithm of the size of the tree. The proof is provided in Section \ref{sec:depth}.

\begin{theorem}\label{depththm}
    Let $p \in (0,1]$, and let $D_{n,p}$ denote the depth of a uniformly chosen vertex in a percolated uniform temporal tree $\mathcal{T}_{n,p}$. Then    
        $$\frac{D_{n,p}}{np} \convprob 1 \ \ \text{and} \ \
        \frac{D_{n,p}}{\log|\mathcal{T}_{n,p}|} \convprob 1
        \ \text{ as }  n \to \infty.$$
\end{theorem}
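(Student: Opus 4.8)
The plan is to show $D_{n,p}/(np) \to 1$ in probability; the second statement then follows from Theorem~\ref{sizethm} exactly as in the height case. The depth of a uniformly chosen vertex of $\mathcal{T}_{n,p}$ can be written as
\[
\prob\big(D_{n,p} = k \,\big|\, \mathcal{T}_{n,p}\big) = \frac{\mathcal{Z}_{k,p}}{|\mathcal{T}_{n,p}|},
\]
where $\mathcal{Z}_{k,p}$ is the number of vertices at depth $k$. So the natural route is a first/second moment analysis of the \emph{profile} $(\mathcal{Z}_{k,p})_k$. First I would compute $\ex \mathcal{Z}_{k,p}$: a fixed vertex at depth $k$ in $T_n$ survives in $\mathcal{T}_{n,p}$ iff its $k$ edge labels are decreasing and all below $p$, which has probability $p^k/k!$; since there are $n^k$ such vertices, $\ex \mathcal{Z}_{k,p} = (np)^k/k!$. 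Summing over $k$ recovers $\ex|\mathcal{T}_{n,p}| = e^{np}$, and the summands are maximized near $k \approx np$, with the mass concentrated in a window of width $O(\sqrt{np})$ around $np$ — this is the source of the $np$ normalization.

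The upper bound on $D_{n,p}$ is the easy direction: for $k \ge (1+\varepsilon)np$, a Chernoff/Stirling estimate on the Poisson-like tail gives $\sum_{k \ge (1+\varepsilon)np} \ex\mathcal{Z}_{k,p} = e^{np}\cdot e^{-c(\varepsilon) np (1+o(1))}$ for some $c(\varepsilon)>0$, while $|\mathcal{T}_{n,p}| \ge e^{np}/\omega_n$ with probability $1-o(1)$ for any $\omega_n \to \infty$ by Theorem~\ref{sizethm} (the limit is $\exponential(1)$, which is a.s. positive). Hence $\prob(D_{n,p} \ge (1+\varepsilon)np) = \ex[\mathcal{Z}_{\ge(1+\varepsilon)np,p}/|\mathcal{T}_{n,p}|] \to 0$. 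Symmetrically, the contribution of depths $k \le (1-\varepsilon)np$ to the total is also $e^{-\Omega(np)}$ relative to $e^{np}$, so $\prob(D_{n,p} \le (1-\varepsilon)np) \to 0$ by the same argument. Combining, $D_{n,p}/(np) \to 1$ in probability.

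The step I expect to be the main obstacle is making the denominator argument fully rigorous when I do not have an independent lower bound on $|\mathcal{T}_{n,p}|$ that is uniform — I must handle the event $\{|\mathcal{T}_{n,p}| \text{ small}\}$ carefully, since on that event the conditional depth distribution is uncontrolled. The clean fix is to split on $A_n = \{|\mathcal{T}_{n,p}| \ge \delta e^{np}\}$: on $A_n^c$ (probability $\le \prob(E \le \delta) + o(1) \to $ small as $\delta \to 0$) bound the probability trivially by $1$, and on $A_n$ use $\mathcal{Z}_{\ge(1+\varepsilon)np,p}/|\mathcal{T}_{n,p}| \le \mathcal{Z}_{\ge(1+\varepsilon)np,p}/(\delta e^{np})$ and take expectations; letting $n\to\infty$ then $\delta \to 0$ finishes it. The lower-bound side is entirely analogous. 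An alternative, perhaps slicker, route is to observe that the decomposition of $\mathcal{T}_{n,p}$ used for Theorem~\ref{sizethm} already exposes a branching-random-walk structure in which the depth of a typical vertex corresponds to the generation of a size-biased (many-to-one) line of descent, whose law of large numbers gives generation $\sim np$ directly; but the direct profile computation above seems the shortest path to the stated result.
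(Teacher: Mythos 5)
Your proposal is correct and follows essentially the same route as the paper: a first-moment (Markov) analysis of the profile $\ex\mathcal{Z}_{k,p}=(np)^k/k!$ showing the Poisson-type tails outside $[(1-\varepsilon)np,(1+\varepsilon)np]$ are exponentially small relative to $e^{np}$, combined with a high-probability lower bound on $|\mathcal{T}_{n,p}|$ from Theorem \ref{sizethm} to control the denominator. The only cosmetic difference is that the paper truncates the deep tail at depth $2en$ using the height result and compares against the threshold $e^{(1-\delta)n}$, whereas you sum the full Poisson tail and split on $\{|\mathcal{T}_{n,p}|\ge\delta e^{np}\}$; both handle the denominator issue validly.
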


Finally, the next theorem establishes the asymptotic expected degree distribution of a uniform temporal tree. In particular, the expected number of leaves is about half of the expected number of vertices, the expected number of vertices with one child is a quarter of the expected size, etc. Once again, this property is similar to the corresponding asymptotic degree distribution of a uniform random recursive tree.
The following theorem is proved in Section \ref{sec:degrees}.

\begin{theorem}\label{degreethm}
    Let $p \in (0,1]$, and, for $k\ge 0$, let $L_{n,k}$ denote the number of vertices of out-degree $k$ in a percolated uniform temporal tree $\mathcal{T}_{n,p}$. Then    
        $$\frac{\EXP L_{n,k}}{e^{np}} \to 2^{-(k+1)} \ \text{ as }  n \to \infty.$$
\end{theorem}

As mentioned above, uniform temporal trees share several of their basic characteristics with random recursive trees.
The uniform random recursive tree on $n$ vertices is a random rooted tree with vertices labelled in $\{1,\ldots,n\}$. The root has label $1$. 
Vertices $i\in \{2,\ldots,n\}$ are attached recursively such that vertex $i$ is attached to a vertex in $\{1,\ldots,i-1\}$ selected uniformly at random. The uniform random recursive tree is one of the most ubiquitous trees in computer science and has been thoroughly studied (see \cite{Meir1978,Devroye1988,Pittel1994,Janson2005,Drmota2009,Addario2018}). 

%There are some interesting relationships between the results derived here and some existing results concerning uniform random recursive trees. First, the height of a uniform random recursive tree and a uniform temporal tree have the same form. To be more exact, both trees have a height asymptotic to $e$ multiplied by the logarithm of the size with probability tending to 1 (\cite{Pittel1994}). Similarly, the depth of a uniformly chosen vertex in both trees is asymptotic to the logarithm of the size with probability tending to 1 (\cite{Drmota2009}). 
One may wonder whether a random recursive tree is equivalent to a uniform temporal tree $\mathcal{T}_{n}$ when conditioning on the size $|\mathcal{T}_{n}|$. However, while in a uniform temporal tree, the root vertex always has maximal degree, in a uniform random recursive tree, there are vertices with much higher degree \cite{devroye1995strong,Addario2018,eslava2022depth}. Moreover, the distribution of mass at the root established in Theorem \ref{sizethm} is different from the "stick-breaking" distribution of the uniform random recursive tree.

%but even at the root differences arise immediately. The degree of the root is deterministically $n$, though in a random recursive tree, the degree is random. However, the degree of the root in a random recursive tree does concentrate around $\log(n)$ in the limit, which matches the structure of a $\mathcal{T}_n$.

\cite{broutin2023} utilize direct couplings between neighbourhoods of vertices in sparse random simple temporal graphs and uniform random recursive trees to prove statements about connectivity. In sparse random graphs neighbourhoods around vertices are tree-like and so match the structure of the $\mathcal{T}_{n,p}$ closely.

%\noindent\textbf{Increasing Combinatorial Trees:} 
%Temporal and uniform recursive trees are not the only trees studied with inherent increasing structure. In combinatorics, general vertex-labelled rooted trees where labels monotonically increase on paths from the root have been studied. In \cite{Bergeron1992, Drmota2009a}, the authors study a variety of trees that generalize the notion of a uniform random recursive tree. In this model, the probability of selecting a vertex is weighted by its current degree instead of being drawn uniformly. A lot is known about this family of trees. In particular, these trees are known to have a height that concentrations around its mean, which is asymptotically logarithmic in the size of the tree just as with the previously mentioned increasing trees.

\section{The uniform spacings coupling}
\label{sec:spacings}

In the proof of Theorems \ref{sizethm}
and \ref{heightthm} we use representations of the labels in a uniform temporal tree as a sum of uniform spacings. We explore the connection in this section. 

Let $U_1,\ldots,U_n$ be a collection of independent $\unif[0,1]$ random variables and let $V_1 \geq \cdots \geq V_n$ be the
corresponding order statistics. We also set $V_0 = 1$, $V_{n+1} = 0$. 
Writing $S_i=V_{i-1}-V_i$ ($i\in \{1,\ldots,n+1\}$) for the induced spacings,
we use the representation 
$$
    \left( S_{1} , \ldots , S_{n+1}\right) := \left(V_0-V_1 , \ldots , V_{n}-V_{n+1}\right) \dist \left(\frac{E_1}{\sum_{i=1}^{n+1}E_i} , \ldots , \frac{E_{n+1}}{\sum_{i=1}^{n+1} E_i} \right),
    $$
where $E_1,\ldots,E_{n+1}$ are independent $\exponential(1)$ random variables (see, e.g., \cite{Devroye1986book}). We record a key observation about uniform random variables needed for the incorporation of uniform spacings. 

    \begin{lemma}\label{spacings}
        Let $U_1,\ldots,U_n$ be a collection of independent $\unif[0,1]$ random variables with corresponding spacings $S_1,\ldots,S_{n+1}$,  let $x \in [0,1]$, and let $I = \{1 \leq i \leq n : U_i \geq x\}$. Define a collection of random variables $(V_1,\ldots,V_n)$, where
        $$
        V_i = \left\{ 
        \begin{array}{cc}
        U_i-1     & i \in I \\
        U_i     & \text{otherwise}
        \end{array}
        \right.
        $$
        Then, $(V_1,\ldots,V_n)$ is distributed like a vector of independent uniform random variables on the interval $[x-1,x]$ i.e., $(V_1,\ldots,V_n) \dist (U_1,\ldots,U_n) - (1-x)$. In particular, if $(S_1,\ldots,S_{n+1})$ is a vector of uniform spacings, then
        $$
        (V_1 , \ldots , V_n) \dist (x-S_1 , \ldots , x-(S_1 + \cdots + S_n)).
        $$
    \end{lemma}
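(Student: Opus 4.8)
The substantive content is the first assertion; the second is a repackaging of it through the standard uniform-spacings representation recalled just above the lemma. For the first claim I would argue pointwise in each coordinate: for fixed $i$ the map $u \mapsto u - \1[u \geq x]$ is a piecewise translation that is a bijection from $[0,1]$ onto $[x-1,x]$, sending $[0,x)$ identically into $[0,x)$ and sending $[x,1]$ onto $[x-1,0]$ by subtracting $1$. Such a map preserves Lebesgue measure, so $V_i = U_i - \1[U_i \geq x]$ is uniform on $[x-1,x]$; if one prefers a hands-on check, splitting on $i \in I$ versus $i \notin I$ gives $\prob(V_i \leq v) = v - x + 1$ for all $v \in [x-1,x]$, which is the uniform$[x-1,x]$ distribution function. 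Since each $V_i$ is a deterministic function of $U_i$ alone and the $U_i$ are independent, the $V_i$ are independent as well. As $[x-1,x] = [0,1] - (1-x)$, this is exactly $(V_1,\ldots,V_n) \dist (U_1,\ldots,U_n) - (1-x)$.

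For the second claim I would then apply the uniform-spacings representation to the sample $(V_1,\ldots,V_n)$, which by the first part is i.i.d.\ uniform on an interval of length one. Writing $V_{(1)} \geq \cdots \geq V_{(n)}$ for its decreasing order statistics and $W_{(1)} \geq \cdots \geq W_{(n)}$ for those of an i.i.d.\ uniform$[0,1]$ sample, we have $V_{(j)} = W_{(j)} - (1-x)$; with $W_{(0)} = 1$, $W_{(n+1)} = 0$ and $S_i = W_{(i-1)} - W_{(i)}$ the induced (uniform) spacings, this gives $V_{(j)} = x - \bigl(1 - W_{(j)}\bigr) = x - (S_1 + \cdots + S_j)$ for $1 \leq j \leq n$. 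This is precisely the claimed identity, once the right-hand display is read—as the preceding sentence of the lemma invites—at the level of the sorted vector, equivalently of the unordered collection $\{V_1,\ldots,V_n\}$; and conversely every vector of uniform spacings arises in this way. The unordered reading is all that is used later, since in Sections~\ref{size} and~\ref{sec:height} the relevant vertices (the children of a fixed vertex) are exchangeable.

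The only mild obstacle is the bookkeeping with the ordering: the left-hand vector $(V_1,\ldots,V_n)$ is exchangeable, whereas $x - (S_1 + \cdots + S_j)$ is monotone in $j$, so the second display must be interpreted modulo sorting (as an equality of induced point processes) rather than coordinatewise — this should be stated explicitly to avoid confusion. Beyond that, one should dispose of the measure-zero edge cases (ties among the $U_i$, and $x \in \{0,1\}$), none of which affect the distributional conclusions.
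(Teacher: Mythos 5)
Your proposal is correct and takes essentially the same route as the paper: the paper's own proof is exactly the observation that the map $u \mapsto u - \1\{u \ge x\}$ rotates the portion of $[0,1]$ above $x$ down below $0$, preserving uniformity coordinatewise (and hence independence), with the spacings identity then read off from the order statistics. Your caveat that the second display must be interpreted at the level of the sorted sample (order statistics / point set), since the right-hand side is monotone while $(V_1,\ldots,V_n)$ is exchangeable, is a reasonable clarification of what the paper leaves implicit rather than a different method.
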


\begin{proof}
The transformation described in the lemma is equivalent to moving the section of the interval $[0,1]$ above $x$ (and the corresponding points in $\{U_1,\ldots,U_n\}$ that lie above $x$) to be below zero. The points in $[0,x]$ and $[1-x,0]$ are still uniformly distributed over these intervals, and thus together are uniform over $[1-x,x]$.
\begin{figure}[H]
\centering
\begin{tikzpicture}[scale=0.5]
    \draw [thick] (-10,0)-- (6,0);
    \draw [ultra thick,blue] (6,0)-- (10,0);
    \draw [thick] (-10,0.5) -- (-10,-0.5) node[below] {0};
    \draw [thick] (10,0.5) -- (10,-0.5) node[below] {1};
    \draw [thick] (6,0.5) -- (6,-0.5) node[below] {$x$};
    \fill (3,0) circle (0.25);
    \draw [<->,thick] (6,0.5) -- (3,0.5) node[above] {$S_1$};
    \fill (0,0) circle (0.25);
    \fill (-1,0) circle (0.25);
    \fill (9,0) circle (0.25);

    \draw [thick] (-10,-4)-- (6,-4);
    \draw [ultra thick,blue] (-10,-4)-- (-14,-4);
    \draw [thick] (-10,-3.5) -- (-10,-4.5) node[below] {0};
    \draw [thick] (6,-3.5) -- (6,-4.5) node[below] {$x$};
    \draw [thick] (-14,-3.5) -- (-14,-4.5) node[below] {$x-1$};
    \fill (3,-4) circle (0.25);
    \draw [<->,thick] (6,-3.5) -- (3,-3.5) node[above] {$S_1$};
    \fill (0,-4) circle (0.25);
    \fill (-1,-4) circle (0.25);
    \fill (-11,-4) circle (0.25);
    \draw [->,thick,dashed] (3,0) -- (3,-2);
    \draw [->,thick,dashed] (0,0) -- (0,-2);
    \draw [->,thick,dashed] (-1,0) -- (-1,-2);
\end{tikzpicture}
\caption{The rotation of the uniform spacings around a vertex $x$. The blue section above $x$ is moved from above $x$ to below $0$. After the segment is moved the points are distributed uniformly over $[x-1,x]$.}
\label{fig:spacings}
\end{figure}
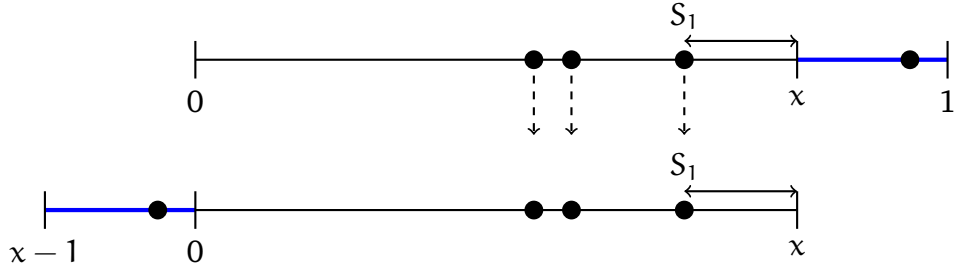
    
\end{proof}

Using the above fact and the uniform spacings representation we obtain an equivalent description of how labels evolve in uniform temporal trees. Let $((S_{v,1} , \ldots , S_{v,n+1}) : v \in T_n)$ be a collection of independent uniform spacings. For a vertex $v \in T_n$, with label $\ell_v$, we define the labels of its children $v_1,\ldots,v_n$ as follows:
    $$
    \ell_{v_j} = p - \sum_{i=1}^j S_{v,i} \quad \forall j \in \{1,\ldots,n\},
    $$
where we delete vertices with negative labels in $\mathcal{T}_{n,p}$. This is equivalent to deleting the vertices that have labels above $p$, and by applying the reverse of the transformation from the above lemma, we see that this labeling is equivalent to that of $T_n$. 
We refer to
generating the labels of $T_n$ in this manner  as the \emph{uniform spacings coupling}. The following notation is useful when discussing the evolution of labels under the uniform spacings coupling. 

\begin{enumerate}
    \item Define the \emph{rank} of a vertex in a $\mathcal{T}_{n,p}$ as the placement of its label among its siblings. That is, the sibling with the largest label gets rank $1$, the sibling with the second largest label gets $2$ etc. We denote the rank of a vertex by $r(v)$, and by convention, the rank of the root is $0$.
    \item The \emph{index} of a vertex is the sum of all the ranks of vertices in $P(v)$, $\iota(v) = \sum_{u \in P(v)} r(u)$.
    \item The set $I(j)$ for any $j \geq 0$ is the subset of all vertices in a $\mathcal{T}_{n,p}$ that have index $j$, $I(j) = \{v \in \mathcal{T}_{n,p} : \iota(v) = j\}$.
\end{enumerate}

Using this notation, we may describe the label of any vertex $v$ that exists in a $\mathcal{T}_{n,p}$ in terms of ranks via the uniform spacings coupling: 
    \begin{equation}\label{labelcoupling2}
    \ell_{v} = p - \sum_{u \in P^-(v)}\sum_{i=1}^{r(u)}S_{p(u),i}~.
    \end{equation}    
See Figure \ref{fig:spacings2} for an illustration.
We use this representation in the proofs of Theorems \ref{sizethm} 
and \ref{heightthm}.

\begin{figure}[H]
\label{fig:spacings2}
\centering
\footnotesize{
\begin{forest}
[$p$
    [$p-S_1$
        [$p-S_1-S_1'$]
        [$p-S_1-S_1'-S_2'$]
        [$\cdots\cdots$]
    ]
    [$p-S_2$]
    [$\cdots\cdots$]
]
\end{forest}
}
\caption{The evolution of labels in a $\mathcal{T}_{n,p}$ according to the spacings coupling. The random variables $S_1,S_2,S_1',S_2'$ are all uniform spacings. The label of a vertex is the label of its next lower-rank sibling (or parent if its rank is $1$) minus a spacing.}
\label{fig:coupling}
\end{figure}
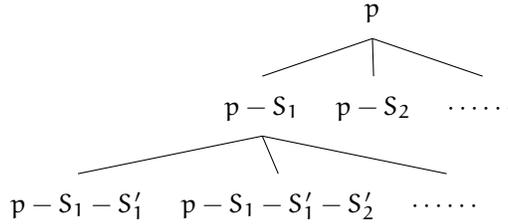

\section{The size}\label{size}

Throughout this section, $p$ is always a fixed parameter in $(0,1]$. First, we examine the first two moments of the size of a $\mathcal{T}_{n,p}$. There are $n^k$ vertices at depth $k$ in $T_n$, and each $v \in T_n$ with $|v| = k$ exists in $\mathcal{T}_{n,p}$ with probability $p^k/k!$ as the edges in $P(v)$ must all be below $p$ and monotone decreasing. Hence, for any $n \geq 1$,
    $$    \ex|\mathcal{T}_{n,p}| = \sum_{k=0}^\infty \frac{(np)^k}{k!} = e^{np}~,
    $$
establishing the first statement of Theorem \ref{sizethm}.
In the next lemma, we provide an upper bound for the second moment. The upper bound yields a concentration inequality for the average size of large collections of similarly distributed subtrees in a uniform temporal tree $\mathcal{T}_{n,p}$.

\begin{lemma}\label{tightness} $\empty$
    \begin{enumerate}
        \item For all $n$, $\ex|\mathcal{T}_{n,p}|^2 \leq 5\left(\ex|\mathcal{T}_{n,p}|\right)^2$. 
        \item Let $m$ be a fixed positive integer and let $q_1,\ldots,q_m \in (0,p)$. For $i\in [m]$, define $p_i = p-q_i$.
        Let $\mathcal{T}_{n,p_i}$, $i\in [m]$ be independent uniform temporal trees and
        let $\mu = \sum_{i=1}^m \ex|\mathcal{T}_{n,p_i}| = \sum_{i=1}^m e^{np_i}$.
        For all $\epsilon > 0$ and $n$,
        $$\prob\left(\left|\sum_{i=1}^m |\mathcal{T}_{n,p_i}| - \mu\right| > \epsilon\mu \right) \leq \frac{5}{\epsilon^2}\left(\frac{\max_{1 \leq i \leq m} e^{-nq_i}}{\sum_{i=1}^m e^{-nq_i}}\right).$$
    \end{enumerate}
\end{lemma}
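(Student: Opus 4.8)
The plan is to prove part (i) by a direct second-moment computation and then deduce part (ii) from it with Chebyshev's inequality.

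For part (i), I would write $|\mathcal{T}_{n,p}| = \sum_{v \in T_n}\mathbf{1}[v \in \mathcal{T}_{n,p}]$, so that $\ex|\mathcal{T}_{n,p}|^2 = \sum_{u,v \in T_n}\prob(u,v \in \mathcal{T}_{n,p})$, and classify ordered pairs $(u,v)$ according to whether one is an ancestor of the other. For \emph{comparable} pairs, since every ancestor of a surviving vertex survives, $\prob(u,v\in\mathcal{T}_{n,p})$ equals the survival probability $p^k/k!$ of the deeper of the two vertices (at depth $k$); summing over the $k+1$ ancestors of each of the $n^k$ depth-$k$ vertices, and subtracting the diagonal once, the comparable pairs contribute exactly $(2np+1)e^{np}$, which is at most $3e^{2np}$ since $np \le e^{np}$. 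For \emph{incomparable} pairs, let $w$ be the deepest common ancestor, at depth $j$, with $u$ at depth $j+a$ and $v$ at depth $j+b$, $a,b \ge 1$. Conditioning on $w$ surviving with label $\ell_w = t$ — an event of density $(p-t)^{j-1}/(j-1)!$ on $(0,p)$ when $j \ge 1$, and for $j=0$ meaning $w$ is the root with $\ell_w = p$ — the two path-extensions below $w$ survive conditionally independently (disjoint edge sets, paths diverging at $w$) with probabilities $t^a/a!$ and $t^b/b!$. There are at most $n^{j+a+b}$ such pairs (the first edges below $w$ on the two paths must differ), and summing the resulting series over $a,b \ge 1$ and $j \ge 0$ collapses the exponential generating functions to $\int_0^p n\, e^{n(p-t)}(e^{nt}-1)^2\,dt + (e^{np}-1)^2 = \big(e^{2np} - 1 - 2np\,e^{np}\big) + (e^{np}-1)^2 \le 2e^{2np}$. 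Adding the two contributions gives $\ex|\mathcal{T}_{n,p}|^2 \le 5e^{2np} = 5(\ex|\mathcal{T}_{n,p}|)^2$.

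For part (ii), independence gives $\var\!\big(\sum_{i=1}^m |\mathcal{T}_{n,p_i}|\big) = \sum_{i=1}^m \var(|\mathcal{T}_{n,p_i}|) \le 5\sum_{i=1}^m e^{2np_i}$ by part (i). Chebyshev's inequality then yields $\prob\big(\big|\sum_i |\mathcal{T}_{n,p_i}| - \mu\big| > \epsilon\mu\big) \le 5\sum_i e^{2np_i}/(\epsilon^2\mu^2)$. Substituting $e^{np_i} = e^{np}e^{-nq_i}$ and $\mu = e^{np}\sum_i e^{-nq_i}$, the factors $e^{2np}$ cancel and the bound becomes $(5/\epsilon^2)\,\big(\sum_i e^{-2nq_i}\big)/\big(\sum_i e^{-nq_i}\big)^2$; bounding $\sum_i e^{-2nq_i} \le (\max_i e^{-nq_i})\sum_i e^{-nq_i}$ gives exactly the claimed inequality.

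The main obstacle is the bookkeeping in part (i): correctly identifying the joint survival probability of two incomparable vertices (the density of the deepest common ancestor's label, together with the conditional independence of the two extensions), and then summing the double series cleanly enough that the constant comes out to $5$ rather than something larger. The rest — the comparable-pair count, the elementary integral, and the Chebyshev step in part (ii) — is routine.
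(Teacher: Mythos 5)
Your proof is correct, and part (ii) coincides with the paper's argument (independence, Chebyshev, and pulling out $e^{np}$ so that $\sum_i e^{-2nq_i}\leq (\max_i e^{-nq_i})\sum_i e^{-nq_i}$). For part (i) you take a genuinely different route to the same constant. The paper also sums $\prob(u,v\in\mathcal{T}_{n,p})$ over pairs, but it parametrizes by the length of the overlap of the two root paths, bounds the resulting triple sum by a geometric-series comparison of binomial coefficients, and finishes with the Poisson-ratio identity $\ex[X_n/(Y_n+1)]$, arriving at $e^{np}+e^{2np}+4e^{2np}(1-e^{-np})\leq 5e^{2np}$. You instead split pairs into comparable and incomparable ones, and for the latter condition on the label $t$ of the deepest common ancestor (density $(p-t)^{j-1}/(j-1)!$, conditional independence of the two divergent extensions with probabilities $t^a/a!$ and $t^b/b!$), so the whole sum collapses into $\int_0^p n e^{n(p-t)}(e^{nt}-1)^2\,dt+(e^{np}-1)^2$; all the counts, densities and the integral check out, and $(2np+1)e^{np}\leq 3e^{2np}$ plus $2e^{2np}$ gives the claimed $5(\ex|\mathcal{T}_{n,p}|)^2$. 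What your computation buys is that it is essentially exact: up to replacing $n(n-1)$ by $n^2$ it shows $\ex|\mathcal{T}_{n,p}|^2=2e^{2np}-e^{np}+o(e^{2np})$, matching the second moment of the exponential limit in Theorem 2.1, whereas the paper's cruder term-by-term bounds only deliver the constant $5$; the paper's version, on the other hand, avoids any conditioning on labels and stays purely combinatorial.
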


The proof of the lemma is given in the Appendix.
As mentioned in the introduction, aside from the above concentration inequality, the other main ingredient in the proof of Theorem \ref{sizethm} is a connection between the labels of $\mathcal{T}_{n,p}$ and a branching random walk. The representation in terms of uniform spacings described in Section \ref{sec:spacings} plays a key role in the construction.

First, let us recall the definition of a branching random walk (see  \cite{Shi2015} for a general introduction to branching random walks). Given a random variable $X$ (called the \emph{step size}) and a locally finite rooted tree $T$, label each edge in $T$ with an independent copy of $X$, $(X_e)_{e \in T}$. The branching random walk on $T$ is a collection of vertex-indexed random variables $(Y_v)_{v \in V}$, where $Y_v$ is the sum of the labels of the edges in $P(v)$. The random variable $Y_v$ is called the \emph{value} of the vertex $v$ in the branching random walk.

We define a branching random walk on a rooted infinite complete binary tree $\mathcal{T}^*$. The root has one child, and every other vertex has precisely two children. To match the standard notation for branching random walks, the root is in generation $-1$, its child in generation $0$, etc. For any generation $L\ge 0$, there are $2^L$ vertices.

\begin{lemma}\label{branchrandwalk}
    Let $L \geq 0$ be an integer and let $\epsilon,\delta,x > 0$. Let $p-p_1,\ldots,p-p_{2^L}$ be the values of the vertices in the $L$-th generation of a branching random walk on the infinite complete binary tree $\mathcal{T}^*$ with step size $X \dist \frac{E}{n}$, where $E$ is an exponential$(1)$ random variable. Define $p_i^+(\epsilon)$ and $p_i^-(\epsilon)$ such that $p-p_i^+(\epsilon) = (1+\epsilon)(p-p_i)$ and $p-p_i^-(\epsilon) = (1-\epsilon)(p-p_i)$ for all $1 \leq i \leq 2^L$. Then,
    $$ 
        \prob\left(\frac{1}{e^{np}}|\mathcal{T}_{n,p}| > x \right) \leq \prob\left(\frac{1}{e^{np}}\left(\sum_{i=1}^{2^L} |\mathcal{T}_{n,p_i^-(\epsilon)}| + |\mathcal{T}_{n,p_i^-(\epsilon)}'|\right) > x(1-\delta) \right) + o_n(1)
    $$
    and
    $$
        \prob\left(\frac{1}{e^{np}}|\mathcal{T}_{n,p}| > x \right) \geq \prob\left(\frac{1}{e^{np}}\left(\sum_{i=1}^{2^L} |\mathcal{T}_{n,p_i^+(\epsilon)}| + |\mathcal{T}_{n,p_i^+(\epsilon)}'|\right) > x(1+\delta) \right) - o_n(1)~,
    $$
    where the trees $\mathcal{T}_{n,p_i^\pm(\epsilon)},\mathcal{T}_{n,p_i^\pm(\epsilon)}'$ are all conditionally independent given ($p_1,\ldots,p_{2^L}$).
\end{lemma}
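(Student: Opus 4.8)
The plan is to iterate the uniform spacings coupling $L$ times at the top of the tree, thereby exposing a binary tree of "rescaled percolation parameters'' whose increments are exactly the spacings, and then to argue that the aggregate spacings behave like exponentials divided by $n$.

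Let me think about the structure. In $\mathcal{T}_{n,p}$ the root has label $p$ and (via the spacings coupling of Section \ref{sec:spacings}) its children $v_1, v_2, \dots$ in decreasing label order satisfy $\ell_{v_j} = p - \sum_{i=1}^j S_{\text{root},i}$, where the $S_{\text{root},i}$ are uniform spacings from $n$ uniforms on $[0,1]$. The key point is that $\mathcal{T}_{n,p}$ decomposes as the root together with $\mathcal{T}_{n,\ell_{v_1}}(v_1), \mathcal{T}_{n,\ell_{v_2}}(v_2), \dots$: each subtree $\mathcal{T}_{n,p}(v_j)$ is itself, conditionally on $\ell_{v_j}$, a uniform temporal tree with parameter $\ell_{v_j}$ (and they are conditionally independent given the labels $\ell_{v_j}$). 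So if I want a \emph{binary} decomposition, I split the children of the root into "the first child $v_1$'' versus "all the rest'', i.e. $\mathcal{T}_{n,p}$ decomposes (up to the single root vertex, which contributes $O(1) = o(e^{np})$) into two pieces: $\mathcal{T}_{n,\ell_{v_1}}$ rooted at $v_1$, and the forest hanging off $v_2, v_3, \dots$. The forest off $v_2,\dots$ is itself distributed like $\mathcal{T}_{n,\ell_{v_1}}$ minus its root, because removing the top spacing and the first child from a uniform temporal tree leaves a uniform temporal tree with the reduced parameter --- this is precisely the content of the spacings coupling, where $\ell_{v_2}$ plays for the remaining forest the role $\ell_{v_1}$ plays for the whole subtree at $v_1$. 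Hmm, I need to be slightly careful: the remaining forest off $\{v_2, v_3, \dots\}$ has parameter $\ell_{v_1}$ in the sense that, after rotating by the first spacing (Lemma \ref{spacings}), the labels $\ell_{v_2} \ge \ell_{v_3} \ge \cdots$ are a decreasing relabelling with "ceiling'' $\ell_{v_1}$. So both halves are copies of $\mathcal{T}_{n,p'}$ with $p' = \ell_{v_1} = p - S_{\text{root},1} = p - E_1/\sum_{i=1}^{n+1} E_i$. Iterating this binary split $L$ times gives $2^L$ pairs of trees $(\mathcal{T}_{n,p_i}, \mathcal{T}_{n,p_i}')$, where $p - p_i$ equals a sum of $L$ such top-spacings along a root-to-generation-$L$ path in $\mathcal{T}^*$, plus the $O(2^L)$ discarded root vertices, which is $o_n(e^{np})$ since $L$ is fixed.

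Next, identify the increment law. Each top-spacing is $S = E/\sum_{i=1}^{n+1} E_i'$ with $E, E_i'$ i.i.d. exponential$(1)$; by the law of large numbers $\sum_{i=1}^{n+1}E_i'/n \to 1$ a.s., so $nS \to E$ in distribution, uniformly enough that over a \emph{fixed} binary tree of depth $L$ the joint law of the $(n S)$'s over all edges of $\mathcal{T}^*$ converges to i.i.d. exponential$(1)$'s, i.e. to step size $X \dist E/n$ up to a $(1+o(1))$ multiplicative factor on each edge. This is where the $\epsilon$-inflation/deflation enters: on the event (of probability $1 - o_n(1)$) that every one of the finitely many normalizing sums $\sum E_i'$ lies within $(1\pm\epsilon)n$, each realized spacing is sandwiched between $(1-\epsilon)$ and $(1+\epsilon)$ times the corresponding $E/n$, hence $p - p_i$ is sandwiched between $p - p_i^-(\epsilon)$ and $p - p_i^+(\epsilon)$ with $p_1,\dots,p_{2^L}$ the branching random walk values. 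Since $x \mapsto |\mathcal{T}_{n,q}|$ is stochastically monotone in $q$ (a larger ceiling only keeps more vertices — this needs a one-line coupling remark), replacing the true $p_i$ by $p_i^-(\epsilon)$ or $p_i^+(\epsilon)$ gives the stated stochastic one-sided bounds, and the $o_n(1)$ absorbs both the discarded root vertices and the probability of the bad normalization event. Conditional independence of the $2^L$ tree-pairs given $(p_1,\dots,p_{2^L})$ is inherited from the independence of the spacings blocks used at disjoint vertices of $T_n$.

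The main obstacle is the bookkeeping in the binary decomposition: verifying that, after rotating away the first spacing at each split, the leftover forest off the higher-rank siblings really is (conditionally, given its ceiling) a genuine copy of $\mathcal{T}_{n,p'}$ rather than something with a shifted or truncated label distribution, and that the two halves are conditionally independent. This is exactly what the uniform spacings coupling (equation \eqref{labelcoupling2}) is built to deliver, but one has to track the indices $\iota(v)$ and which blocks of spacings are consumed where, so that at generation $L$ the $2^L$ trees use disjoint, hence independent, families of spacings. The convergence $nS \to E$ and the $\epsilon$-sandwiching are then routine given the exponential representation of spacings.
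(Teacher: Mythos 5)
Your overall route is the same as the paper's: split off the rank-one child's subtree from the forest of higher-rank siblings, iterate to expose a binary tree whose increments are single spacings, sandwich the spacings by $(1\pm\epsilon)E/n$ via the law of large numbers on the normalizing sums, and use monotonicity in the percolation parameter. The gap is in the step you dismiss as bookkeeping: the claim that, conditionally on its ``ceiling'', the leftover forest off the higher-rank siblings \emph{is} a copy of $\mathcal{T}_{n,p'}$ (minus a root). This is false as an exact distributional identity, and equation \eqref{labelcoupling2} does not deliver it. The forest hanging off $v_2,\ldots,v_n$ has only $n-1$ (more generally $n-r(v)$) top-level subtrees rather than $n$; conditionally on the spacings already consumed (which determine the ceiling), the remaining spacings are those of fewer points on a shrunken interval and, crucially, they share the random normalizing denominator $E_1+\cdots+E_{n+1}$ with the spacings you have conditioned on, so they are neither fresh spacings nor independent of the conditioning. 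For the same reason your final sentence, that conditional independence of the $2^L$ pairs ``is inherited from the independence of the spacings blocks used at disjoint vertices of $T_n$'', is too quick: the forest at $v$ and the values $p_i$ are built from spacings in the \emph{same} block at $p(v)$.

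This is exactly why the paper needs Lemma \ref{treeforest}: one must construct auxiliary trees $\mathcal{T}^{\pm}(v)\dist \mathcal{T}_{n,\ell_v^{\pm}(\epsilon)}$ from freshly renormalized spacings, couple them to $\mathcal{F}(v)$, control the label shifts (a $\bin(n,O(x_n k/n^2))$ argument to rule out extra children), and show the tail subtrees are negligible via second-moment bounds, obtaining only $|\mathcal{F}(v)|\le(1+\delta)|\mathcal{T}^-(v)|$ and $|\mathcal{F}(v)|\ge(1-\delta)|\mathcal{T}^+(v)|$ with probability tending to one. That approximation is precisely where the $(1\pm\delta)$ slack in the statement of Lemma \ref{branchrandwalk} gets consumed; your sketch never uses $\delta$ at all, which is a symptom of the illegitimate exact-copy shortcut (if the forest really were an exact conditional copy, the lemma would hold with $\delta=0$). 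So the skeleton of your argument matches decomposition \eqref{sizeidentity} and the map $\phi$ in the paper, but the ``main obstacle'' you identify is a genuine, separately-proved approximation lemma, not a consequence of the coupling already in hand, and your proposal as written does not supply it.
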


\begin{proof}[Proof of Lemma \ref{branchrandwalk}]

Let $\cE$ be the event that all vertices with $\iota(v) \leq L$ have children of index $(\iota(v)+1),\ldots,L+1$. That is, $\cE$ is the event that the out-degree of each vertex in $I(j)$ is at least $L-j+1$ for each $j \in \{0,\ldots,L\}$. Since the degree of fixed-index vertices tends to infinity as $n \to \infty$ and $L$ is a fixed integer, we know that $\prob(\cE) \to 1$ as $n \to \infty$.

Let $v \in \mathcal{T}_{n,p}$ be a fixed vertex of rank $k$, and suppose that $p(v)$ has $\ell$ children listed in order of increasing rank $v_1,\ldots,v_\ell$ where $v = v_k$. We define $\mathcal{F}(v)$ to be the collection of all the subtrees rooted at higher rank siblings of $v$, that is,

$$\mathcal{F}(v) = \bigcup_{i=k+1}^\ell \mathcal{T}_{n,p}(v_i).$$

A simple observation is that every vertex in $\cup_{j \geq L+1} I(j)$ is in exactly one set $\mathcal{T}_{n,p}(v)$ or $\mathcal{F}(v)$ for some $v \in I(L+1)$, that is,
$$
    \bigcup_{j \geq L+1} I(j) = \bigcup_{v \in I(L+1)} \left(\mathcal{T}_{n,p}(v) \cup \mathcal{F}(v)\right).
$$
Since all the sets on the right-hand side are disjoint and $|I(0) \cup \ldots \cup I(L+1)|$ is finite, this implies that

\begin{equation}\label{sizeidentity}
    \frac{1}{e^{np}}|\mathcal{T}_{n,p}| = \frac{1}{e^{np}}\sum_{v \in I(L+1)} |\mathcal{T}_{n,p}(v)| + \frac{1}{e^{np}}\sum_{v \in I(L+1)} |\mathcal{F}(v)| + o_n(1).
\end{equation}
For any fixed $k \geq 0$, define $\mathcal{T}_{n,p}(k)$ to be the induced subtree of $\mathcal{T}_{n,p}$ on the vertex set $I(0) \cup \cdots \cup I(k)$. Let $\epsilon > 0$. Since $\mathcal{T}_{n,p}(L+1)$ is a finite tree, we can apply the law of large numbers to the spacings from the uniform spacings coupling $(S_{v,i} : v \in \mathcal{T}_{n,p}(L+1), i \in \{1,\ldots,L+1\})$ to assert that, for a sequence of independent $\exponential(1)$ random variables $(E_{v,i})_{v \in T_n, 1 \leq i \leq n+1}$,
    $$
    \prob(S_\leq) := \prob\left( \bigcap_{v \in \mathcal{T}_{n,p}(L+1)}\left\{ \ell_v \leq \ell_v^-(\epsilon) \right\} \right) \to 1
    $$
and
    $$
    \prob(S_\geq) := \prob\left( \bigcap_{v \in \mathcal{T}_{n,p}(L+1)}\left\{ \ell_v \geq \ell_v^+(\epsilon) \right\} \right) \to 1
    $$
as $n \to \infty$, where
    $$
    \ell_v^-(\epsilon) := p - \frac{1-\epsilon}{n}\sum_{u \in P^-(v)}\sum_{i=1}^{r(u)} E_{p(u),i}, \text{ and } \ell_v^+(\epsilon) := p - \frac{1+\epsilon}{n}\sum_{u \in P^-(v)}\sum_{i=1}^{r(u)} E_{p(u),i} .
    $$
Conditioning on $S_\leq$ and $S_\geq$ allows us to remove many dependencies between vertex labels.

Next, on the event $\cE$, we recursively define a one-to-one mapping $\phi$ of the vertices in a $\mathcal{T}_{n,p}(L+1)$ with the vertices of $\mathcal{T}^*(L)$ (the tree truncated at generation $L$) and assign a corresponding edge labeling $(X_e)_{e \in \mathcal{T}^*(L)}$. The following properties hold for our mapping and together provide our branching random walk description:
\begin{enumerate}
    \item In $\mathcal{T}^*(L)$, the collection of random variables $(\sum_{e \in P(v)} X_e)_{v \in \mathcal{T}^*(L)}$ is a branching random walk with step size $\frac{1+\epsilon}{n}E$.
    \item For all $v \in \mathcal{T}_{n,p}(L+1)$, the value of $\phi(v)$ is $\ell_v^+(\epsilon)$, i.e., $\sum_{e \in P(\phi(v))} X_e = \ell_v^+(\epsilon)$.
    \item The $j$th generation of $\mathcal{T}^*(L)$ contains the vertices of index $j+1$ in $\mathcal{T}_{n,p}(L+1)$.
\end{enumerate}
We only describe the construction for $(\ell^+_v(\epsilon) : v \in I(L+1))$, though the same can be done for $(\ell^-_v(\epsilon) : v \in I(L+1))$ using a similar procedure.

First, the root of $\mathcal{T}_{n,p}(L+1)$, which we again denote with $\rho$, is mapped to the root of $\mathcal{T}^*(L)$. The unique index-1 vertex in $\mathcal{T}_{n,p}(L+1)$---call it $c$---, is mapped to the unique child of the root in $\mathcal{T}^*(L)$. The edge going into this vertex in $\mathcal{T}^*(L)$ is given the label $X_e = \frac{1+\epsilon}{n}E_{\rho,1}$. The three properties above hold for this base case.

Now, suppose that $\phi$ has been defined for $I(0),\ldots,I(k)$, $k \leq L$ and that the three properties hold for the partial assignment. Each vertex in $v \in I(k)$ has exactly one child $c_v$ and one sibling $s_v$ of index $k+1$. As $v \in I(k)$, its image $\phi(v)$ in $\mathcal{T}^*$ is already defined, and by assumption the left and right children of $\phi(v)$, $\phi(v)_\ell$ and $\phi(v)_r$, are not in the image $\phi(I(0) \cup \cdots \cup I(k))$. Define $\phi(c_v) = \phi(v)_\ell$ and $\phi(s_v) = \phi(v)_r$. We give the edges $e_1 = \{\phi(v),\phi(v)_\ell\}$ and $e_2 = \{\phi(v),\phi(v)_r\}$ the labels $\frac{1+\epsilon}{n}E_{v,1}$ and $\frac{1+\epsilon}{n}E_{p(v),r(v)+1}$ respectively. Point (iii) holds by definition for this extension of $\phi$ as all index-$(k+1)$ vertices are either a direct sibling or child of an index-$k$ vertex previously placed into the tree. Moreover, by assuming that (ii) holds for the $k$-th step implies that it still holds for the $k+1$-th step. Since the exponential random variables $E_{v,1}$ and $E_{p(v),r(v)+1}$ have not been used in the construction previously (this is a consequence of assuming that property (ii) holds for the first $k$ levels), property (i) holds for the extension as well.

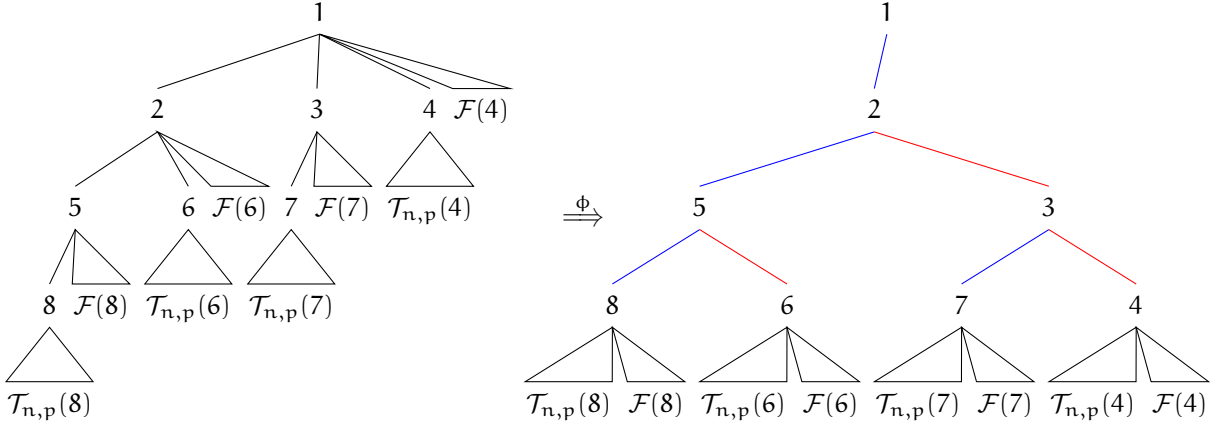
\begin{figure}[H]
\centering
\label{fig:bij}
\footnotesize{
\begin{forest}
for tree={s sep=2mm, inner sep=0, l*=1.2, thick}
[,phantom ,for children={l sep=1ex,fit=band}
    [$1$
        [$2$
            [$5$
                [$8$
                    [$\mathcal{T}_{n,p}(8)$,roof]
                ]
                [$\mathcal{F}(8)$,roof]
            ]
            [$6$
                [$\mathcal{T}_{n,p}(6)$,roof]   
            ]
            [$\mathcal{F}(6)$,roof]
        ]
        [$3$
            [$7$
                [$\mathcal{T}_{n,p}(7)$,roof]
            ]
            [$\mathcal{F}(7)$,roof]
        ]
        [$4$
            [$\mathcal{T}_{n,p}(4)$,roof]
            { \draw (.east) node[right]{$ \quad \quad \quad {\overset{\phi}{\Longrightarrow}}$}; }
        ]
        [$\mathcal{F}(4)$,roof]
    ]
    [$1$
        [$2$,edge=blue
            [$5$,edge=blue
                [$8$,edge=blue
                    [$\mathcal{T}_{n,p}(8)$,roof]  
                    [$\mathcal{F}(8)$,roof]            
                ]
                [$6$,edge=red
                    [$\mathcal{T}_{n,p}(6)$,roof]  
                    [$\mathcal{F}(6)$,roof]            
                ]
            ]
            [$3$,edge=red
                [$7$,edge=blue
                    [$\mathcal{T}_{n,p}(7)$,roof]  
                    [$\mathcal{F}(7)$,roof]
                ]
                [$4$,edge=red
                    [$\mathcal{T}_{n,p}(4)$,roof]  
                    [$\mathcal{F}(4)$,roof]
                ]
            ]
        ]
        [,phantom]
    ]
]
\end{forest}
}
\caption{The mapping $\phi$ up to index $L=3$. The left tree is $\mathcal{T}_{n,p}$ and the right is the binary tree $\mathcal{T}^*$ with the labelling obtained from $\phi$. The vertices are ordered from left to right in order of increasing index in $\mathcal{T}_{n,p}$. A left child (blue edge) in $\mathcal{T}^*$ corresponds to moving down to the vertex's  child of the smallest index in $\mathcal{T}_{n,p}$, and a right child (red edge) corresponds to moving to a vertex's sibling of the smallest index in  $\mathcal{T}_{n,p}$.}
\end{figure}

%The mapping $\phi$ offers our desired approximation of the labels in $\mathcal{T}_{n,p}(L+1)$ via a branching random walk. The only piece left to complete the proof is a way to approximate the sizes of the forests $\mathcal{F}(v)$. We delay this work until the appendix and record the result.

%Consider an updated version of $T_n$ where we replace the labels $\ell_v$ for $v \in \mathcal{T}_{n,p}(L+1)$ with $\ell_v^-(\epsilon)$. We call this new tree $T_n^-$. Conditioned on the events $S_\leq$ and $E$, it holds that deleting non-decreasing paths $T_{n}^-$ produces a tree $\mathcal{T}_{n,p}^-$ that is strictly larger than $\mathcal{T}_{n,p}$. Moreover, for $v \in I(L+1)$, $\mathcal{T}_{n}^-(v)$ is distributed as $\mathcal{T}_{n,p^-(\epsilon)}$, and is conditionally independent of $\mathcal{T}_{n}^-$ given $(\ell_v^-(\epsilon) : v \in I(L+1))$. Of course, a completely analogous statement holds we replace with $(1+\epsilon)E_{v,i}/n$ instead giving corresponding trees $T_{approx}^+$ and $\mathcal{T}_{approx}^+$ with analogous properties. The following approximation for the size of $\mathcal{F}(v)$ can be made within this updated construction.

This construction is almost sufficient to complete the proof, though there is still an approximation for $\mathcal{F}(v)$ with a tree that is needed. We delay proving this fact until the appendix, though we record and use the result.

\begin{lemma}\label{treeforest}
    Let $\delta > 0$ and let $v \in \mathcal{T}_{n,p}$ be a vertex of fixed finite index. There are random trees $\mathcal{T}^-(v) \dist |\mathcal{T}_{n,\ell_v^-(\epsilon)}|$, $\mathcal{T}^+(v) \dist |\mathcal{T}_{n,\ell_v^+(\epsilon)}|$, conditionally independent of the labels of $v$ and its lower rank siblings in $T_n$ given $(\ell_v^\pm(\epsilon) : v \in I(L+1))$ (and in particular $T_n(v)$), such that $\prob(|\mathcal{F}(v)| \leq (1+\delta)|\mathcal{T}^-(v)|) \to 1$ and $\prob(|\mathcal{F}(v)| \geq (1-\delta)|\mathcal{T}^+(v)|) \to 1$ as $n \to \infty$.
\end{lemma}

Note in the above lemma that $\mathcal{F}(v)$ is already independent of everything other than the label of its siblings and its parent, so the above-noted independence lines up with the desired result. The approximation for $\mathcal{T}_{n,p}(v)$ is a little bit simpler. If we remove paths that are non-decreasing and have labels all below $\ell_v^-(\epsilon)$ instead of $\ell_v$ from $T_n$, then the resulting tree $\mathcal{T}_{approx}(v)$ is at least as large as $\mathcal{T}_{n,p}(v)$ and is conditionally independent of the rest of $T_n$ when we condition on the label $\ell_v^-(\epsilon)$ as desired.

Using this lemma, and the fact that the events $S_\leq$ and $\cE$ have probability tending to $1$ as $n \to \infty$ we see that, for any $x > 0$ and any $\delta > 0$, (\ref{sizeidentity}) gives
    \begin{align*}
        \prob\left(\frac{1}{e^{np}}|\mathcal{T}_{n,p}| >x \right) &\leq \prob\left(\frac{1}{e^{np}}\left(\sum_{v \in I(L+1)} |\mathcal{T}_{n,p}(v)| + |\mathcal{F}(v)|\right) + o_n(1) > x \right) + o_n(1) \\
        &\leq \prob\left(\frac{(1+\delta)}{e^{np}}\left(\sum_{v \in I(L+1)} |\mathcal{T}_{approx}^-(v)| + |\mathcal{T}^-(v)|\right) > x \right) + o_n(1).
    \end{align*}
Replacing $\mathcal{T}^-_{approx}(v)$ and $\mathcal{T}^-(v)$ in the above computation with $\mathcal{T}^+_{approx}(v)$ and $\mathcal{T}^+(v)$ yields the corresponding lower bound, completing the proof.
\end{proof}

Applying Lemmas \ref{tightness} and \ref{branchrandwalk} transforms the analysis of $|\mathcal{T}_{n,p}|$ into the analysis of a branching random walk on $\mathcal{T}^*$. Next, we establish some results about the aforementioned walk.

\begin{lemma}\label{finalcomputation} Let $(p_i)_{i=1}^{2^L},(p_i^\pm(\epsilon))_{i=1}^{2^L}$ be as in Lemma \ref{branchrandwalk} and set $q_i = p-p_i$, $q_i^\pm(\epsilon) = p-p_i^\pm(\epsilon)$. 
Introduce the notation 
$Q_i=nq_i$ and $Q^\pm_{i,\epsilon}=n q_i^\pm(\epsilon)$.
For any $n \geq 1$,
    \begin{enumerate}
        \item $X_L := \sum_{i=1}^{2^L} e^{-Q_i} \convdist \frac{E}{2}$ as $L \to \infty$.
        \item Let $X_L^\pm(\epsilon) := \sum_{i=1}^{2^L} e^{-Q^\pm_{i,\epsilon}}$. There is a sequence $\epsilon(L)$ for $L \geq 0$ such that $X_L^\pm(\epsilon(L)) \convdist \frac{E}{2}$.
        \item There is a sequence $\epsilon(L)$ for $L \geq 0$ such that $\ex\left[\frac{\max_{1 \leq i \leq 2^L} \exp(-Q^\pm_{i,\epsilon(L)})}{X_L^\pm(\epsilon(L))}\right] \to 0$ as $L \to \infty$. Moreover, $\ex\left[\frac{\max_{1 \leq i \leq 2^L} \exp(-Q_i))}{X_L}\right] \to 0$ as $L \to \infty$
    \end{enumerate}
\end{lemma}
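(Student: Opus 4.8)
The plan is to recognize the quantities $X_L = \sum_{i=1}^{2^L} e^{-Q_i}$ as the (normalized) \emph{additive martingale} of the branching random walk on $\mathcal{T}^*$, and to invoke the classical theory of the Biggins martingale together with the specific structure of the step distribution $X \dist E/n$. Write $Q_i = n q_i = n(p - p_i) = \sum_{e \in P(v_i)} n X_e$, so that $Q_i$ is a sum of $L$ independent $\exponential(1)$ random variables, one per edge on the path to the $i$-th generation-$L$ vertex. Thus $e^{-Q_i}$ is a product of $L$ independent factors $e^{-E}$, each with mean $\int_0^\infty e^{-t} e^{-t}\,dt = 1/2$. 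Since each vertex at generation $k$ has exactly two children, $\ex[X_L \mid \mathcal{F}_{L-1}] = 2 \cdot \tfrac12 \cdot (\text{value at level } L-1) $ — more precisely $\ex X_L = 2^L \cdot (1/2)^L = 1$ for every $L$, and $(X_L)_{L \geq 0}$ is a nonnegative martingale with respect to the natural filtration. The first step is therefore to verify that $X_L$ converges almost surely and in $L^1$ to a nondegenerate limit $X_\infty$: this follows from the Biggins/Kingman–Hammersley/Lyons criterion (the Kesten–Stigum-type condition $\ex[W \log^+ W] < \infty$ for the one-step reproduction, which is trivially satisfied here since the offspring distribution is deterministically $2$ and the step has all exponential moments), or most cleanly by checking $\sup_L \ex X_L^2 < \infty$ via a direct second-moment recursion using independence of the two subtrees.

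The second step is to \emph{identify} the limit law as $E/2$. Here I would use the recursive distributional self-similarity of the branching random walk: conditioning on the first generation (generation $0$, the single child of the root, with value $E_0/n$ where $E_0 \sim \exponential(1)$), the tree below splits — but actually since the root of $\mathcal{T}^*$ has a single child and that child has two children, the cleanest split is at generation $0$'s children. One gets $X_\infty \dist e^{-E_0}\bigl(\tfrac12 e^{-E_1} X_\infty' + \tfrac12 e^{-E_2} X_\infty''\bigr)$-type fixed-point equations with i.i.d.\ copies; matching this to the known fixed-point equation satisfied by $E/2$ (equivalently, using the Remark after Theorem \ref{sizethm}: $\sum_i E_i \prod_{j \leq i} U_j \dist E$, after the change of variables $U = e^{-E'}$ which sends $\exponential(1)$ spacings-products precisely to the branching-random-walk weights) pins down $X_\infty \dist E/2$. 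Alternatively, and perhaps more robustly, one can compute the Laplace transform $\ex[e^{-\lambda X_L}]$ and show it satisfies a recursion converging to $1/(1+\lambda/2)^{\,}$... actually to $\ex[e^{-\lambda E/2}] = (1 + \lambda/2)^{-1}$; I would set up $\psi_L(\lambda) = \ex[e^{-\lambda X_L}]$, derive $\psi_L(\lambda) = \ex_E[\psi_{L-1}(\lambda e^{-E})]^2$ from the binary branching (for the subtree rooted at a generation-$0$ vertex), and show the fixed point is the claimed Laplace transform. For part (ii), note $Q_{i,\epsilon}^{\pm} = (1\mp\epsilon)Q_i$, so $X_L^\pm(\epsilon) = \sum_i e^{-(1\mp\epsilon)Q_i}$; choosing $\epsilon(L) \to 0$ slowly enough (e.g.\ $\epsilon(L) = 1/L$), a simple comparison $e^{-(1\mp\epsilon)Q_i} = e^{\pm\epsilon Q_i} e^{-Q_i}$ together with the fact that $\max_i Q_i = O(L)$ with high probability (by a union bound, since each $Q_i$ is a sum of $L$ exponentials and there are $2^L$ of them) shows $X_L^\pm(\epsilon(L)) = (1+o(1)) X_L$ in probability, hence the same limit $E/2$.

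For part (iii) — the maximal-summand-to-sum ratio — the key is that the additive martingale does not concentrate its mass on a single summand. Since $X_L \to X_\infty > 0$ a.s.\ and $\ex X_L = 1$ with $\sup_L \ex X_L^2 < \infty$, it suffices to show $\ex[\max_i e^{-Q_i}] \to 0$. Now $\max_i e^{-Q_i} = e^{-\min_i Q_i}$, and $\min_i Q_i$ is (essentially) the minimal displacement of the branching random walk with step $\exponential(1)/n$; but after unscaling by $n$ the minimum over $2^L$ i.i.d.-path sums of $L$ exponentials grows linearly in $L$ — by the standard first-moment bound, $\prob(\min_i Q_i \leq aL) \leq 2^L \prob(\text{Gamma}(L,1) \leq aL) \leq 2^L e^{-L I(a)}$ where $I(a) = a - 1 - \log a$ is the large-deviation rate; this is $\to 0$ as soon as $a$ is small enough that $I(a) > \log 2$, which holds for all $a < a_0$ for some $a_0 > 0$. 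On that event $\max_i e^{-Q_i} \leq e^{-a_0 L/2}$ (say), and on the complement we bound crudely by $\max_i e^{-Q_i} \leq 1$ times a probability $\leq e^{-cL}$; combining, $\ex[\max_i e^{-Q_i}] \to 0$ exponentially, and dividing by $X_L \to E/2$ (using e.g.\ that $\ex[\mathbf{1}\{X_L < \eta\}]$ is small uniformly in $L$ since $X_\infty$ has no atom at $0$, or truncating) gives $\ex[\max_i e^{-Q_i}/X_L] \to 0$. The same argument with $Q_i$ replaced by $(1\mp\epsilon(L))Q_i$ handles the $\pm$ versions, since $\epsilon(L)\to 0$ only perturbs the rate $I(a)$ by $o(1)$.

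\textbf{Main obstacle.} I expect the delicate point to be part (iii), specifically the interchange of the $\max$-over-summands limit with the division by $X_L$: one must rule out the event that $X_L$ is anomalously small \emph{simultaneously} with the maximal summand being a non-negligible fraction of it. The clean way around this is the uniform-integrability / $L^2$-boundedness of $(X_L)$, which makes $\{X_L \text{ small}\}$ a uniformly rare event and lets one split the expectation on $\{X_L \geq \eta\}$ (where the ratio is $\leq \max_i e^{-Q_i}/\eta \to 0$) and $\{X_L < \eta\}$ (where the ratio is $\leq 1$ and the probability is $\leq$ some $\rho(\eta) \to 0$ as $\eta \to 0$, uniformly in $L$), then send $L \to \infty$ and finally $\eta \to 0$. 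The second, subtler obstacle is the precise identification of the limit as $E/2$ rather than merely "some nondegenerate law"; the Laplace-transform fixed-point computation is the safest route, but it requires checking uniqueness of the fixed point in the relevant class of transforms, which is where I would lean on the identity recorded in the Remark following Theorem \ref{sizethm}.
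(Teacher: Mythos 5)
Your proposal follows the same three-step skeleton as the paper's proof. For (i), the paper also views $X_L$ as the additive martingale of the walk and identifies the limit through the self-similarity of $\mathcal{T}^*$, namely $X_L \dist U(X_{L-1}+X_{L-1}')$ with one shared uniform factor $U=e^{-E_0}$; it then closes the identification by computing all moments from the fixed point ($\ex X^k = k!/2^k$) and invoking moment determinacy of the exponential, which is the cleanest resolution of the uniqueness issue you correctly flag as an obstacle (your Laplace-transform route needs a separate uniqueness-of-fixed-point argument, or the Remark identity). For (ii) and (iii), the paper cites the Biggins--Hammersley--Kingman theorem to get $\min_i Q_i \asymp \max_i Q_i \asymp L$, whereas you rederive the needed bounds by Chernoff-plus-union-bound estimates on sums of $L$ exponentials; your route is more elementary and self-contained, the paper's is shorter. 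Your splitting of the expectation in (iii) on $\{X_L\ge \eta\}$ versus $\{X_L<\eta\}$, followed by $L\to\infty$ and then $\eta\downarrow 0$, is exactly the paper's argument (no uniformity in $L$ is needed, only the limsup).

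Three slips to repair, none fatal. First, $\epsilon(L)=1/L$ does not work: since $Q^{\pm}_{i,\epsilon}=(1\pm\epsilon)Q_i$ and $\max_i Q_i$ grows linearly in $L$, you need $\epsilon(L)\cdot L\to 0$ for the correction factors $e^{\pm\epsilon(L)Q_i}$ to tend to $1$; the paper takes $\epsilon(L)=L^{-2}$, and your phrase ``slowly enough'' points in the wrong direction. Second, with the paper's convention (root at generation $-1$, its single child at generation $0$), each $Q_i$ is a sum of $L+1$ independent exponentials, so $\ex X_L = 2^L\cdot 2^{-(L+1)} = 1/2$, not $1$; your accounting is off by one edge and, as written, contradicts the claimed limit $E/2$, which has mean $1/2$. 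Third, in the recursive step the two subtree copies share the root-edge factor, so the Laplace recursion is $\psi_L(\lambda)=\ex\bigl[\psi_{L-1}(\lambda U)^2\bigr]$ with the square inside the expectation over $U$, and the fixed-point equation you display with the $\tfrac12$ coefficients is not the correct one; the correct distributional identity is $X \dist U(X'+X'')$.
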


\begin{remark}
    Note that the random variables $X_L$ and $X_L^\pm(\epsilon)$ do not depend on the parameter $n$. The step sizes of the branching random walks defining each $p_i$ and $p_i^\pm(\epsilon)$ are distributed like $\frac{1}{n}E$ and $\frac{1\pm\epsilon}{n}E$ respectively, so the random variables $Q_i=nq_i$ and $Q^\pm_{i,\epsilon}= nq_i^\pm(\epsilon)$ do not depend on $n$ due to cancellations.
\end{remark}

All of the required lemmas have now been presented and we are prepared to prove
 Theorem \ref{sizethm}.

\begin{proof}[Proof of Theorem \ref{sizethm}]
Let $x > 0$. By Lemma \ref{branchrandwalk} we know that, for any $\epsilon,\delta >0$ and $L \geq 0$
$$
\prob\left(\frac{1}{e^{np}}|\mathcal{T}_{n,p}| > x \right) \leq \prob\left( \frac{1}{e^{np}}\sum_{i=1}^{2^L} \left( |\mathcal{T}_{n,p_i^-(\epsilon)}| + |\mathcal{T}_{n,p_i^-(\epsilon)}'| \right) > (1-\delta)x\right) + o_n(1).
$$
Applying Lemma \ref{tightness} we get that for any $\delta_2 > 0$,
    \begin{align*}
    & \prob\left(\frac{1}{e^{np}}|\mathcal{T}_{n,p}| > x\right) \\
    \leq & \prob\left(\frac{1}{e^{np}}\sum_{i=1}^{2^L}\left(e^{n(p-q_i^-(\epsilon))} + e^{n(p-q_i^-(\epsilon))}\right) > \frac{1-\delta}{1+\delta_2}x\right) + \frac{5}{\delta_2^2}\ex\left[ \frac{\max_{1 \leq i \leq 2^L}e^{-Q_i}}{\sum_{i=1}^{2^L} e^{-Q_i}} \right] + o_n(1) \\
    = &  \prob\left(\sum_{i=1}^{2^L}2e^{-Q^-_{i,\epsilon}} > \frac{1-\delta}{1+\delta_2}x\right) + \frac{5}{\delta_2^2}\ex\left[ \frac{\max_{1 \leq i \leq 2^L}e^{-Q_i}}{\sum_{i=1}^{2^L} e^{-Q_i}} \right] + o_n(1).  
\end{align*}
Note that this upper bound does not have any dependence on $n$, and so implies that 
    \begin{align*}
    \limsup_{n \to \infty} \prob\left(\frac{1}{e^{np}}|\mathcal{T}_{n,p}| > x\right) \leq \prob\left(\sum_{i=1}^{2^L}2e^{-Q^-_{i,\epsilon}} > \frac{1-\delta}{1+\delta_2}x\right) + \frac{5}{\delta_2^2}\ex\left[ \frac{\max_{1 \leq i \leq 2^L}e^{-Q_i}}{\sum_{i=1}^{2^L} e^{-Q_i}} \right]  
    \end{align*}
for any $\delta,\delta_2, \epsilon > 0$ and any $L \geq 0$. As $\delta$ is arbitrarily small, continuity of measure implies that the inequality can be strengthened to
    \begin{eqnarray*}
\lefteqn{   \limsup_{n \to \infty} \prob\left(\frac{1}{e^{np}}|\mathcal{T}_{n,p}| > x\right) } \\
    & \leq& \prob\left(\sum_{i=1}^{2^L}2e^{-Q^-_{i,\epsilon}} > (1+\delta_2)^{-1}x\right) + \frac{5}{\delta_2^2}\ex\left[ \frac{\max_{1 \leq i \leq 2^L}e^{-Q_i}}{\sum_{i=1}^{2^L} e^{-Q_i}} \right].  
    \end{eqnarray*}
Then, Lemma \ref{finalcomputation} asserts that, for any $\delta_2,\delta_3 > 0$ we can choose $L$ large and $\epsilon$ small enough such that
    $$
    \prob\left(\sum_{i=1}^{2^L}2e^{-Q^-_{i,\epsilon}} > (1+\delta_2)^{-1}x\right) \leq \prob\left(E > (1+\delta_2)^{-1}x\right) + \delta_3
    $$
    and
    $$
    \frac{5}{\delta_2^2}\ex\left[ \frac{\max_{1 \leq i \leq 2^L}e^{-Q_i}}{\sum_{i=1}^{2^L} e^{-Q_i}} \right] \leq \frac{5}{\delta_2^2}\delta_2^3.
    $$
    Combining these with the above we get that
    $$
    \limsup_{n \to \infty} \prob\left(\frac{1}{e^{np}}|\mathcal{T}_{n,p}| > x\right) \leq \prob\left(E > (1+\delta_2)^{-1}x\right) + \delta_3 + 5\delta_2,
    $$
where $\delta_2,\delta_3$ are arbitrary. Taking $\delta_2,\delta_3 \downarrow 0$ and using the continuity of measure gives that
    $$
    \limsup_{n \to \infty} \prob\left(\frac{1}{e^{np}}|\mathcal{T}_{n,p}| > x\right) \leq \prob(E > x).
    $$
Repeating an almost identical procedure we obtain a matching lower bound,
    $$
    \liminf_{n \to \infty} \prob\left(\frac{1}{e^{np}}|\mathcal{T}_{n,p}| > x\right) \geq \prob(E > x).
    $$
This completes the proof of the first statement in Theorem \ref{sizethm}. 

Let $v_1,\ldots,v_m$ be the rank $1,\ldots,m$ children of the root in  $\mathcal{T}_{n,p}$. By the uniform spacings coupling there exist independent $\exponential(1)$ random variables $F_{1},\ldots,F_m$ such that
    $$
    \left(\exp(n\ell_{v_1}-np) , \ldots , \exp(n\ell_{v_m}-np)\right) \convdist (\exp(-F_{1}) , \ldots , \exp(-F_{1} - \ldots -F_{m}))
    $$
as $n \to \infty$. Conditioned on the labels of $v_1,\ldots,v_m$, the trees $\mathcal{T}_{n,p}(v_1),\ldots,\mathcal{T}_{n,p}(v_m)$ are all independent and distributed like $\mathcal{T}_{n,\ell(v_1)},\ldots,\mathcal{T}_{n,\ell(v_m)}$ trees. Thus, by the first part of this theorem,
    \begin{align*}
    \left(\frac{\mathcal{T}_{n,p}(v_1)}{e^{n\ell_{v_1}}},\ldots,\frac{\mathcal{T}_{n,p}(v_m)}{e^{n\ell_{v_m}}}\right) \convdist (E_1,\ldots,E_m),
    \end{align*}
where the sequence $(E_i)_{i \geq 1}$ is a sequence of independent $\exponential(1)$ random variables. Combining the two convergences with Slutsky's theorem implies 
    $$
    \left( \frac{\mathcal{T}_{n,p}(v_1)}{e^{np}} , \ldots , \frac{\mathcal{T}_{n,p}(v_m)}{e^{np}} \right) \convdist \left( E_1\exp(-E_{v_1}) , \ldots , E_m\exp(-E_{v_1} - \ldots - E_{v_m}) \right)
    $$
as $n \to \infty$. Since $e^{-E} \dist \unif[0,1]$ we are done.
\end{proof}

\section{The height}
\label{sec:height}

In this section, we prove Theorem \ref{heightthm}. We start with a technical lemma that follows directly from Cram\'er's large deviations theorem \cite{cramer1938, cramer1944, cramer+touchette2022}.

\begin{lemma}\label{cramer}
Let $k \geq 0$ be a fixed integer, and let $K$ be a uniform random variable on $\{ 1,\ldots,k\}$. For $i\in \{1,\ldots,k\}$, let $G_i$ be independent gamma$(i)$ random variables and let $X_1,\ldots,X_n$ be independently distributed as $G_K$. Then, there is a sequence $\phi(k)$ with $\phi(k) \to 0$ as $k \to \infty$, such that for any $0 < x < \frac{k+1}{2}$,
\begin{equation}\label{cramerbound}
\prob \left( \sum_{i=1}^n X_i \leq nx \right) \geq \exp\left(-n\left( \log \left( \frac{k}{ex}\right) + \phi(k) + o_n(1)\right)\right).
\end{equation}
\end{lemma}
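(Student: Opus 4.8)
The plan is to derive this directly from Cram\'er's theorem applied to the i.i.d. sum $\sum_{i=1}^n X_i$, where each $X_i \dist G_K$ with $K$ uniform on $\{1,\ldots,k\}$. First I would identify the logarithmic moment generating function $\Lambda(\lambda) = \log \ex e^{\lambda X_1}$. Since $\ex e^{\lambda G_i} = (1-\lambda)^{-i}$ for $\lambda < 1$, averaging over $K$ gives $\ex e^{\lambda X_1} = \frac{1}{k}\sum_{i=1}^k (1-\lambda)^{-i} = \frac{1}{k}\cdot\frac{(1-\lambda)^{-1}\left((1-\lambda)^{-k}-1\right)}{(1-\lambda)^{-1}-1} = \frac{1}{k}\cdot\frac{(1-\lambda)^{-k}-1}{\lambda}$ for $0 < \lambda < 1$ (with the obvious limit at $\lambda = 0$). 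The mean is $\ex X_1 = \frac1k\sum_{i=1}^k i = \frac{k+1}{2}$, which is why the bound is stated for $x < \frac{k+1}{2}$: we are estimating a genuine left-tail large deviation probability.

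The core step is Cram\'er's lower bound: for $x < \ex X_1$, $\frac{1}{n}\log \prob\!\left(\sum_{i=1}^n X_i \leq nx\right) \to -\Lambda^*(x)$, where $\Lambda^*(x) = \sup_{\lambda}(\lambda x - \Lambda(\lambda))$ is the Legendre transform; more precisely the lower bound half gives $\prob(\sum X_i \le nx) \ge \exp(-n(\Lambda^*(x) + o_n(1)))$. So it suffices to show $\Lambda^*(x) \le \log\!\left(\frac{k}{ex}\right) + \phi(k)$ for a suitable $\phi(k)\to 0$, uniformly in $x$ on the relevant range. Rather than solve the optimization exactly, I would just plug in a well-chosen value of $\lambda$ into $\lambda x - \Lambda(\lambda)$ to get an upper bound on $\Lambda^*(x)$ — this is legitimate since $\Lambda^*(x)$ is a supremum. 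The natural guess, motivated by the exponential case ($k=1$, where $\Lambda^*(x) = x - 1 - \log x$, not quite matching $\log(1/(ex)) = -1-\log x$ — so one has to be a little careful), is to pick $\lambda$ so that the dominant term $(1-\lambda)^{-k}$ in $\Lambda$ behaves like what we want. Concretely, for the range $x < \frac{k+1}{2}$, writing $\Lambda(\lambda) = \log\frac{(1-\lambda)^{-k}-1}{k\lambda}$, one has $\Lambda(\lambda) \le \log\frac{(1-\lambda)^{-k}}{k\lambda} = -k\log(1-\lambda) - \log(k\lambda)$, so $\lambda x - \Lambda(\lambda) \le \lambda x + k\log(1-\lambda) + \log(k\lambda)$. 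Choosing $\lambda = 1 - \frac{k}{k - \text{(something)}}$... more cleanly, substitute $\lambda = 1 - \frac{x}{?}$; I expect the choice that makes $k\log(1-\lambda)$ and $\lambda x$ balance is $1-\lambda \approx x/k$ scaled appropriately, i.e. take $1-\lambda = \frac{x}{k}\cdot c$. With $1-\lambda = \frac{x}{k}$ one gets, when this is in $(0,1)$ (which holds since $x < \frac{k+1}{2} < k$ for $k \ge 2$), $\lambda x - \Lambda(\lambda) \le (1-\tfrac{x}{k})x + k\log\tfrac{x}{k} + \log(k(1-\tfrac xk)) = x - \tfrac{x^2}{k} - k\log\tfrac{k}{x} + \log(k-x)$, and then $\Lambda^*(x) \le x - k\log\frac kx + \log(k-x)$... this needs reconciling with the target $\log\frac{k}{ex} = \log k - \log x - 1$. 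The discrepancy suggests I have the wrong form and should instead revisit: the target $-n(\log(k/(ex)) + \cdots)$ means $\Lambda^*(x) \approx \log k - \log x - 1$, a quantity of order $\log k$ for fixed $x$, whereas the naive bound above has a $-k\log(k/x)$ term — negative and large — indicating the probability is actually much larger than the claimed bound, so the claimed bound is weak and should hold comfortably. Thus the real content is a clean one-line choice of $\lambda$ (I anticipate $\lambda = 1 - \frac1k$ or $\lambda$ near $1$, isolating the $i=k$ term $G_k$ which dominates) giving $\Lambda(\lambda) \ge $ roughly $-k\log(1-\lambda) - \log k$ as a lower bound on $\Lambda$, hence $\lambda x - \Lambda(\lambda) \le \lambda x + k\log(1-\lambda) + \log k$; with $\lambda \to 1$ this tends to $x\cdot 1 + (\text{large negative}) + \log k$, again too good.

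Given this, the main obstacle is pinning down the correct scaling of $\lambda$ so that the resulting upper bound on $\Lambda^*(x)$ is exactly $\log\frac{k}{ex} + \phi(k)$ with $\phi(k)\to 0$ and no leftover $k$-dependence growing with $k$ — i.e. identifying which single moment-generating-function term to retain. I believe the right move is: keep only the $i = \lceil ? \rceil$ term so that $\Lambda(\lambda) \ge \log\frac{(1-\lambda)^{-k}}{k}$ is too crude; instead use $\ex e^{\lambda X_1} \ge \frac1k (1-\lambda)^{-j}$ for the optimal $j=j(x)\le k$, then optimize $\lambda$ for a $\text{Gamma}(j)$-type rate, obtaining $\Lambda^*(x) \le \log k + j\log\frac{j}{x} - j + x$ minimized over $j$; setting $j \approx x$ gives $\Lambda^*(x) \le \log k + 0 - \text{(lower order)}$, matching $\log\frac{k}{ex}$ up to the $-\log x - 1$ correction which is absorbed by choosing $j = \lceil x\rceil$ and Stirling. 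So the structure of the proof is: (1) write down $\Lambda$; (2) lower bound $\Lambda(\lambda) \ge -\log k + \log\ex e^{\lambda G_j}$ for any fixed $j \le k$; (3) deduce $\Lambda^*(x) \le \log k + \Lambda^*_{G_j}(x)$ where $\Lambda^*_{G_j}$ is the rate function of a $\text{Gamma}(j)$; (4) for $x < (k+1)/2$ pick $j = j(x)$ (roughly $j \approx x$, truncated to $\le k$) so that $\Lambda^*_{G_j}(x) \le \log\frac{1}{ex} + \phi(k)$, using the explicit formula $\Lambda^*_{G_j}(x) = j\log(j/x) - j + x$ and Stirling's approximation $j\log j - j = \log j! - \frac12\log(2\pi j) + o(1)$; (5) invoke the Cram\'er lower bound $\prob(\sum X_i \le nx) \ge e^{-n(\Lambda^*(x) + o_n(1))}$ to conclude, with $\phi(k)$ collecting all the $k$-uniform error terms and $\to 0$. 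The one genuine subtlety to check is uniformity of the $\phi(k)$ and $o_n(1)$ terms over the full range $0 < x < (k+1)/2$, handling small $x$ (where $j=1$) and $x$ near $(k+1)/2$ (where $j$ approaches $k$) separately if needed.
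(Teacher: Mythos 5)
Your overall frame (Cram\'er lower bound plus an upper bound on the rate function $\Lambda^*(x)$) matches the paper, but the quantitative route you settle on cannot reach the stated bound. Two issues. First, a logical slip early on: plugging a single $\lambda$ into $\lambda x-\Lambda(\lambda)$ gives a \emph{lower} bound on the supremum $\Lambda^*(x)$, not an upper bound; your later reformulation (steps (2)--(3): lower bound $\Lambda(\lambda)\ge -\log k+\log\ex e^{\lambda G_j}$ pointwise, hence $\Lambda^*(x)\le \log k+\Lambda^*_{G_j}(x)$) repairs this. Second, and fatally, that repaired bound is too weak. Since $\Lambda^*_{G_j}(x)=x-j+j\log(j/x)\ge 0$ for every $j$, your step (4) can never produce the negative correction $-\log x-1$ in the target $\log\bigl(\tfrac{k}{ex}\bigr)=\log k-\log x-1$: for any $x>1/e$ the lemma asserts a rate strictly below $\log k$, while $\log k+\min_{j\le k}\Lambda^*_{G_j}(x)\ge\log k$ (e.g.\ at $x=1$ your bound gives $\log k$ versus the claimed $\log k-1+\phi(k)$; for $x\le 1$ the loss is exactly $x$). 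The ``$-\log x-1$ correction absorbed by Stirling'' step is therefore not available, and this is not a cosmetic constant: in the height proof the lemma is applied with $x=c$ slightly above $1/e$, and the whole argument hinges on the rate being below $\log K$ (equivalently $ce>1$), which a single-Gamma-component reduction provably cannot deliver. Your mid-proposal diagnosis that ``the claimed bound is weak and should hold comfortably'' comes from testing $\lambda$ near $1$, which is the wrong tilting regime for a left-tail event.

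The paper's proof keeps the full mixture and optimizes over \emph{negative} $\lambda$, which is where the sharp constant comes from. For $\lambda<0$ the geometric sum gives $\ex e^{\lambda X_1}=\frac{1-(1-\lambda)^{-k}}{k(-\lambda)}$, so
\[
\lambda x-\Lambda(\lambda)=\lambda x+\log k+\log(-\lambda)-\log\bigl(1-(1-\lambda)^{-k}\bigr),
\]
and $\sup_{\lambda<0}\bigl(\lambda x+\log(-\lambda)\bigr)=-1-\log x$, attained at $\lambda=-1/x$; the paper checks (via $J(0)=0$, $J'(0)=x-\frac{k+1}{2}<0$, $J(-1/x)>0$) that the true maximizer $\lambda_*$ is negative, so the leftover term $-\log\bigl(1-(1-\lambda_*)^{-k}\bigr)$ is the $\phi(k)\to 0$ error, yielding $\Lambda^*(x)\le\log k-\log x-1+\phi(k)$ and then the claim via Cram\'er. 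Intuitively, the optimal change of measure also reweights the mixture index $K$ geometrically toward small values rather than conditioning every $X_i$ onto one fixed component, and that is exactly the factor your approach discards. To fix your proof you would need to carry the entire sum $\frac1k\sum_{i=1}^k(1-\lambda)^{-i}$ through the Legendre transform at negative $\lambda$, i.e.\ essentially reproduce the paper's computation.
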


\begin{proof}
By Cram\'{e}r's theorem \cite[Theorem 23.11]{klenke2008}, we have that, for $0 < x < \frac{k+1}{2}$,
$$
\liminf_{n \to \infty} \frac{1}{n}\log\prob\left( \sum_{i=1}^n X_i \leq nx \right) \geq - \inf_{0 < y < x} I(y) = -I(x)
$$
where
$$
I(x) = \sup_{\lambda \in \R} \left(\lambda x - \log\ex e^{\lambda X_1}\right)
$$
is the Legendre-Fenchel transform of the cumulant-generating function of $X_1$ (i.e., the logarithm of the moment generating function $\ex e^{\lambda X_1}$).
Observe that, for $\lambda \in (0, 1)$,
\begin{align*}
\ex \left( e^{\lambda X_1} \right) 
&= \frac{1}{k} \sum_{i=1}^k \ex \left( e^{\lambda G_i} \right)           \\
&= \frac{1}{k} \sum_{i=1}^k \frac{1}{(1 - \lambda)^i} \\
&= \frac{-1}{k\lambda}\left(1 - \frac{1}{(1-\lambda)^{k}}\right),
\end{align*}
and for $\lambda \geq 1$, $\ex e^{\lambda X_1} = \infty$. Altogether,
$$
\log \ex e^{\lambda X_1} = 
\left\{
\begin{array}{cc}
\infty, & \lambda \geq 1 \\
1, & \lambda = 0 \\
\frac{-1}{k\lambda}\left(1-(1-\lambda)^{-k}\right), & \text{otherwise}
\end{array}
\right.
$$
It is known that the function $J(\lambda) := \lambda x - \log \ex e^{\lambda X_1}$ is concave and continuous, and one can compute that $J(0) = 0$ and $J'(0) = x - \frac{k+1}{2}$. Together these facts imply that $J(\lambda) \leq 0$ for all $\lambda \geq 0$ when $0 < x < \frac{k+1}{2}$. Moreover, for $x > 0$, we have that $\lim_{\lambda \to -\infty} J(\lambda) = -\infty$. Noting that $J(-1/x) > 0$, the above facts imply that $J(\lambda)$ attains a global maximum at some $\lambda_* \in (\infty,0)$ when $0 < x < \frac{k+1}{2}$. Thus,
\begin{align*}
I(x) &= \sup_{\lambda < 0} \left( \lambda x + \log(k) + \log(-\lambda) - \log\left( 1 - \frac{1}{(1-\lambda)^k} \right) \right) \\
&\leq \log(k) + \sup_{\lambda > 0} (\lambda x + \log(-\lambda)) + \underbrace{\log\left(1-\frac{1}{(1-\lambda_*)^k}\right)}_{:= \phi(k)} \\
&= \log(k) - \log(x) - 1 + \phi(k)
\end{align*}
for all fixed $0 < x < \frac{k+1}{2}$. Note that $\phi(k) \to 0$ as $k \to \infty$. From here, recalling (\ref{cramerbound}) completes the proof.
\end{proof}
\medskip

Now we are ready to prove Theorem \ref{heightthm}.
For simplicity, we only present the proof for $p=1$. The extension to the general case is immediate.

\begin{proof}[Proof of Theorem \ref{heightthm}]
Recall that a vertex $u \in T_n$ is in the temporal tree $\mathcal{T}_n$ when the sequence of labels from the root to $u$ is monotonically decreasing. Thus, for a vertex in generation $d$ of $T_n$, 
$$
\prob ( u \in \mathcal{T}_n )  = \frac{1}{d!}.
$$
By the union bound, and Stirling's formula,
$$
\prob ( H_n \ge en ) 
\le \frac{n^{en}} {\lceil {en} \rceil !}
\le \frac{n^{en}} {\Gamma (en+1) }
\le \frac{1} {\sqrt{2 \pi en} }
= o_n(1).
$$
From here, it suffices to show that for every constant $\gamma < e$, and any integer $M > 0$,
$$\prob ( H_n \ge \gamma n  + M ) \to 1$$
as $n \to \infty$. 
We do this by exhibiting the existence of a vertex in $\mathcal{T}_n$ of depth $\ge \gamma n + M$, following a proof method that goes back to \cite{biggins1976,biggins1977}. For the rest of the proof, we view our tree as being constructed from the uniform spacings coupling of $T_n$.

Let $K>1$ be an integer.
We trim the entire tree $T_n$ by, for each $v \in T_n$, keeping only the $K$ children with the largest labels ordered from greatest to least label as $v_1,\ldots,v_K$. The result of this process is an infinite $K$-ary tree that we denote by $\mathcal{T}_n^{(K)}$. Recall that, using the uniform spacings coupling we may assume that, for $v \in \mathcal{T}_n^{(K)}$, 
$$
\ell_v = 1 - \sum_{u \in P^-(v)} S_{p(u),r(u)}^*,
$$
where $S_{v,i}^* = S_{v,1} + \cdots + S_{v,i}$ and $r(u)$ is the rank of vertex $u$. This means that the labels of vertices in $\mathcal{T}_n^{(K)}$ follow a generalized branching random walk in which the branching factor is $K$ at each generation and the step sizes are distributed like $(S_{1}^* , \ldots , S_{K}^*)$ where $S_i^* = S_1 + \cdots + S_i$ and $(S_1,\ldots,S_{n+1})$ is a vector of uniform spacings (for the rest of the proof we shall refer to these types of generalized step size branching random walks as just branching random walks). Altogether, this implies that
$$
\prob ( H_n \le \gamma n + M )
\le
\prob \left( \bigcap_{v \in \mathcal{T}_n^{(K)} : |v| = \lceil \gamma n + M \rceil} \left\{ \sum_{u \in P^-(v)} S_{p(u),r(u)}^* \ge 1 \right\} \right).
$$
Thus, what matters is the largest label of any vertex at depth $\lceil \gamma n +M \rceil$ in
a $K$-ary branching random walk in which the children have displacements
distributed as $S_1^*, \ldots, S_K^*$. Let $D$ be the maximal label of any vertex at distance $M$ from the root.
The $K^M$ vertices at distance $M$ from the root
have subtrees that behave in an i.i.d.\ manner,
and each vertex in these subtrees has a label at most equal to $D$ plus the
total displacement within its subtree.  
Therefore,
\begin{align}\label{basic}
\prob &\left( \bigcap_{v \in \mathcal{T}_n^{(K)} : |v| = \lceil \gamma n + M \rceil} \left\{ \sum_{u \in P^-(v)} S_{p(u),r(u)}^* \ge 1 \right\} \right) \notag \\
&\le
\prob \left( \bigcap_{v \in \mathcal{T}_n^{(K)} : |v| = \lceil \gamma n + M \rceil} \left\{ \sum_{u \in P^-(v)} S_{p(u),r(u)}^* \ge 1-D \right\} \right)^{K^M} \notag \\
&\le
\prob ( D > \epsilon ) + A_n(\epsilon)^{K^M},
\end{align}
where
$$
A_n(\epsilon) = \prob \left( \bigcap_{v \in \mathcal{T}_n^{(K)} : |v| = \lceil \gamma n + M \rceil} \left\{ \sum_{u \in P^-(v)} S_{p(u),r(u)}^* \ge 1-\epsilon \right\} \right).
$$
Recall that, jointly over $1 \le i \le K$,
$$
S_i^* \dist \frac { E_1 + \cdots + E_i } { E_1 + \cdots + E_{n+1} },
$$
where $E_1,\ldots,E_{n+1}$ are i.i.d.\ exponential random variables.
As $D$ is smaller than the sum of $2K^M$ random variables distributed as $S_K^*$,
we have by Markov's inequality,
$$
\prob ( D > \epsilon ) \le \frac{2K^M}{\epsilon} \frac{K}{n+1} = o_n(1).
$$
We show that for special choices of $\epsilon > 0, K > 0$,
\begin{align}\label{A_n}
 A_n(\epsilon) \le q < 1
\end{align}
for all $n$ large enough. Then the right-hand-side in (\ref{basic}) is upper bounded by $ o_n(1) + q^{K^M},$ which can be made as small as desired by taking $M$ large enough and letting $n$ tend to infinity. We conclude by establishing (\ref{A_n}). 

The dependence of the distribution of $S_i^*$ on $n$ is a slight inconvenience, so we consider a branching random walk with larger displacements. To that end, we introduce a $\ber(p_n)$ random variable $B_n$, where
$$
p_n = \prob \left( E_{K+1} + \cdots + E_{n+1} \le n (1-\epsilon) \right).
$$
Then, the values of vertices in the branching random walk on $\mathcal{T}_n^{(K)}$ defined by step sizes
\begin{align*}
 (1-B_n) \frac { E_1 + \cdots + E_i } { n(1-\epsilon) }  + B_n \quad 1 \leq i \leq K
\end{align*}
dominate the values for vertices in the original branching random walk described above. By the law of large numbers, for fixed $K$ and $\epsilon$,  $p_n \to 0$ as $n \to \infty$.
So, given $\delta > 0$, $p_n \le \delta$ for all $n$ large enough. Let $B$ be a $\ber(\delta)$ random variable. We introduce a final branching random walk where the family of step sizes, 
for $1 \leq i \leq K$, is
\begin{align*}
 W_i = 
 \left\{ \begin{array}{ll} 
 E_1 + \cdots + E_i  & \text{if $B=0$} \\
 \infty &
 \text{if $B=1$.} 
\end{array}
 \right.
 \end{align*} 
If we let, for all $1 \leq i \leq K$, and  $v \in \mathcal{T}_n^{(K)}$,
\begin{align*}
 W_{v,i} = 
 \left\{ \begin{array}{ll} 
 E_1 + \cdots + E_i  & \text{if $B_v=0$} \\
 \infty &
 \text{if $B_v=1$.} 
\end{array}
 \right.
 \end{align*} 
for $(E_{v,i} : 1 \leq i \leq K, v \in \mathcal{T}_n^{(K)})$ i.i.d. $\exponential(1)$ random variables and $B_v$ independent $\ber(\delta)$ random variables, we get that, for sufficiently large $n$,
\begin{equation}\label{anotherbound}
A_n(\epsilon)
\le
\prob \left( \bigcap_{v \in \mathcal{T}_n^{(K)} : |v| = \lceil \gamma n + M \rceil} \left\{ \sum_{u \in P^-(v)} W_{p(u),r(u)} \ge n(1-\epsilon)^2 \right\} \right).
\end{equation}
As the step sizes of this new branching random walk do not depend upon $n$,
we can identify a supercritical Galton-Watson process with an extinction probability that upper bounds the right-hand side of (\ref{anotherbound}).

Fix an integer $L$ and consider all $K^L$ vertices in generation $L$ of the new branching random walk with step sizes $(W_1,\ldots,W_K)$. If any of the Bernoulli random variables for the vertices in generation $\ell \le L$ from the root is one, then the root has no children. Otherwise, we set vertices in generation $L$ from the root to be a child of the root (in the new Galton-Watson process) if its label is $\le L(1-2\epsilon)/\gamma$. For each child of the root with the new Galton-Watson process, we repeat this procedure. That is, for each vertex $u$ with value $U$ in the branching random walk with step sizes $(W_1,\ldots,W_K)$ that is also in the Galton-Watson process, we add all vertices that are $L$ generations below $u$ that have values below $U + L(1-2\epsilon)/\gamma$ if none of the descendants of $u$ that are up to $L$ generations below $u$ have their Bernoulli value set to one. We call the resultant of this Galton-Watson process $G$.

If $|G| = \infty$, then the values of the vertices at level $j$ are $\le Lj (1-2\epsilon)/\gamma$ for all $j \geq 1$. These vertices correspond to vertices in the original tree at level $Lj$. In particular, conditioned on survival, there are vertices at level $\lceil \gamma n \rceil$ that have value
$$
\le \left\lfloor \frac {\lceil \gamma n \rceil}{L} \right\rfloor \times \frac{L(1-2\epsilon)}{\gamma} 
\le \frac{1 } {\gamma} + (1-2\epsilon) n 
< (1-\epsilon)^2 n
$$
for $n$ large enough. In particular, combined with (\ref{anotherbound}), this implies that $A_n(\epsilon) \leq \prob(|G| < \infty)$.

Define $c = (1-2\epsilon)/\gamma$. To determine the survival probability of $G$, we check that the expected number of children of the root, call it $G_1$, is larger than one. Indeed, 
\begin{align*}
\ex[G_1] &= (1-\delta)^{K^L}\sum_{|v| = L} \prob\left(\text{the value of $v$ is at most $Lc$}\right) \\
&= (1-\delta)^{K^L} K^L\prob\left(\text{a uniform vertex in generation $L$ has value at most $Lc$}\right) \\
&= (1-\delta)^{K^L} K^L \prob \left( G_{Y_1} + \cdots + G_{Y_L} \le  L c \right),    
\end{align*}
where $Y_1,\ldots,Y_L$ are i.i.d.\ random integers uniformly distributed
on $\{ 1, \ldots, K\}$, and $G_m$ stands for a gamma random
variable with parameter $m$. By Lemma \ref{cramer},
\begin{align*}
\prob \left( G_{Y_1} + \cdots + G_{Y_L} \le  L c \right) \geq \exp \left( -L \left( \log \left( \frac{ K }{ce } \right) + \phi(K)  + o_L(1) \right) \right) = \frac{(ce)^L}{K^L} e^{-L\phi(K) - o_L(L)}
\end{align*}
for $c \leq \frac{K+1}{2}$, and so
$$
\ex[G_1] \geq (1-\delta)^{K^L} (ce)^L e^{-L\phi(K) - o_L(L)}.
$$
For fixed $\gamma < e$, we can find $\epsilon > 0$ small enough such that $ce > 1$. Then, we can choose $L$, $K$ large enough and $\delta > 0$ small enough that the expected number of children is strictly larger than one. Then, with these chosen $c,L,K,\delta$, $G$ becomes extinct with some probability $q < 1$. As noted above, this implies that $A_n(\epsilon) \le q$ for all $n$ large enough, finishing the proof.
\end{proof}
\smallskip

\section{Typical depths}
\label{sec:depth}

In this section, we prove Theorem \ref{depththm}. As in the previous section, we present the proof for the $p=1$ case only, as the extension to the general case is straightforward.

Let $(\mathcal{Z}_k)_{k \geq 0}$ be the number of vertices in generation $k$ of the tree $\mathcal{T}_n$.  Since we argued previously that the height of $\mathcal{T}_{n} \convprob en$, it is sufficient to argue that $\mathcal{Z}_1 + \ldots + \mathcal{Z}_{(1-\epsilon)n}$ and $\mathcal{Z}_{(1+\epsilon)n} + \ldots + \mathcal{Z}_{2en}$ are both negligible compared to $|\mathcal{T}_n|$ for $\epsilon \in (0,1)$. One may do this using Markov's inequality, the union bound, and Stirling's formula. Indeed, for $\delta < \epsilon^2$,
\begin{align*}
    \prob\left( \mathcal{Z}_1 + \cdots + \mathcal{Z}_{(1-\epsilon)n} > e^{(1-\delta)n}\right) &\leq (1-\epsilon)n\prob\left( \mathcal{Z}_{(1-\epsilon)n} > \frac{1}{(1-\epsilon)n}e^{(1-\delta)n}\right) \\
    &\leq \frac{(1-\epsilon)^2n^2n^{(1-\epsilon)n}}{((1-\epsilon)n)!e^{(1-\delta)n}} \\
    &\leq C\frac{n^{2}e^{(1-\epsilon)n-(1-\delta)n}}{(1-\epsilon)^{(1-\epsilon)n}} \\
    &\leq Cn^2\exp\left( \delta n - \epsilon n + (1-\epsilon)\log(1-\epsilon) n \right) \\
    &\leq Cn^2\exp\left( \delta n - \epsilon n + (1-\epsilon)\epsilon n \right) \to 0,
\end{align*}
for some constant $C > 0$. For the other side, we repeat the same computation, with the only changes that $(1-\epsilon)$ is replaced with $(1+\epsilon)$ and the sum contains $2en$ terms now. Indeed, for $\delta < \epsilon^3$,
\begin{align*}
    \prob\left( \mathcal{Z}_{(1+\epsilon)n} + \ldots + \mathcal{Z}_{2en} > e^{(1-\delta)n} \right) &\leq \frac{(2e)^2n^2n^{(1+\epsilon)n}}{((1+\epsilon)n)!e^{(1-\delta)n}} \\
    &\leq C'\frac{n^2e^{(1+\epsilon)n}}{(1+\epsilon)^{(1+\epsilon)n}e^{(1-\delta)n}} \\
    &\leq C'n^2\exp\left( \epsilon n - \delta n - (1+\epsilon)\log(1+\epsilon)n \right) \\
    &\leq C'n^2\exp\left( \epsilon n - \delta n - (1+\epsilon)(\epsilon-\epsilon^2)n \right) \to 0,
\end{align*}
where $C'$ is again a positive constant. Finally, one more application of Markov's inequality allows us to assert that $\prob(|\mathcal{T}_n| \leq e^{(1-\delta/2)n}) \to 0$ as $n \to \infty$. Choosing $\delta = \epsilon^4$ (recall that we assume $\epsilon < 1$) we are able to conclude that 
$$
    \frac{\mathcal{Z}_1 + \cdots + \mathcal{Z}_{(1-\epsilon)n} + \mathcal{Z}_{(1+\epsilon)n} + \cdots + \mathcal{Z}_{2en}}{|\mathcal{T}_n|} \convprob 0
$$
as $n \to \infty$ for any $\epsilon > 0$. By definition, this means that
$$
    \frac{\mathcal{Z}_{(1-\epsilon)n} + \cdots + \mathcal{Z}_{(1+\epsilon)n}}{|\mathcal{T}_n|} \convprob 1
$$
as $n \to \infty$ for any $\epsilon > 0$, which means that uniformly chosen vertices will be between depth $(1-\epsilon)n$ and $(1+\epsilon)n$ for any $\epsilon > 0$ with probability tending to 1 as $n \to \infty$ which is the desired result.
\qed

\section{The expected degree distribution}
\label{sec:degrees}

In this section we prove Theorem \ref{degreethm}. Once again, for the sake of clarity of the presentation, we only consider the case $p=1$. The extension to the general case is immediate.

Consider a vertex $u$ of depth $\ell$ in $T_n$. The vertex has degree at least $k$ in $\mathcal{T}_n$ if and only if the labels of the $\ell$ edges on the path from the root to $u$ are decreasing, moreover, at least $k$ of the $n$ labels of the out-edges of $u$ have labels less than the minimum edge label on the path from the root to $u$. The probability of this event equals
\[
\frac{1}{\ell!} \cdot \frac{n}{n+\ell}\cdot \frac{n-1}{n-1 +\ell}\cdots 
\frac{n-k+ 1}{n-k+ 1 +\ell}~.
\]
Thus, the expected number of vertices of outdegree at least $k$ is
\[
\EXP L_{n,\ge k}
= \sum_{\ell=1}^\infty
\frac{n^\ell}{\ell!} \prod_{i=0}^{k-1}\frac{1}{1+\frac{\ell}{n-i}}
= e^n \EXP \prod_{i=0}^{k-1}\frac{1}{1+\frac{X_n}{n-i}}~,
\]
where $X_n$ is a \poisson($n$) random variable. Since 
$X_n/(n-i)\to 1$ in probability 
for each fixed $i\in \{0,1,\ldots,k-1\}$,
we have that for every $k\ge 1$,
\[
\lim_{n\to \infty} \frac{\EXP L_{n,\ge k}}{e^n} = 2^{-k}~, 
\]
which implies Theorem \ref{degreethm} for $p=1$.

\section{Appendix}

\subsection*{Proof of Lemma \ref{tightness}}

Recall that 
$\ex|\mathcal{T}_{n,p}| = e^{np}$. The second moment
of $|\mathcal{T}_{n,p}| = \sum_{v\in T_n} \1_{v\in \mathcal{T}_{n,p}}$
is a sum, over pairs of vertices of $T_n$, the probability that both vertices exist in $\mathcal{T}_{n,p}$. 
We may split the sum based on where the pairs of paths stop overlapping,
    \begin{align*}
    &\ex|\mathcal{T}_{n,p}|^2 \\
    &\leq \ex|\mathcal{T}_{n,p}|
    + 2\sum_{k=0}^{\infty}\sum_{\ell = k}^{\infty}\sum_{m=0}^k\underbrace{n^{k-m}\binom{n}{2}n^{m-1}n^{\ell - k + m - 1}p^{\ell+m}}_{I}\underbrace{\frac{1}{m!}\frac{1}{(\ell - k + m)!}}_{II}\underbrace{\frac{(\ell-k+2m)!}{(\ell+m)!}}_{III}.
    \end{align*}
Term $I$ comes from choosing the pairs of paths and ensuring the paths have edge labels below $p$, term $II$ is the probability that the portion of both paths after the overlap is decreasing ($m$ and $\ell-k+m$ edges), and term $III$ is the probability that the $(k-m)$ edges in the overlap are such that both paths as a whole are decreasing. Rearranging this expression and swapping the order of summation gives
    \begin{align*}
    \ex|\mathcal{T}_{n,p}|^2 &\leq \ex|\mathcal{T}_{n,p}| + \sum_{k=0}^\infty\sum_{\ell = k}^\infty\sum_{m=0}^k \frac{(np)^{\ell+m}}{(\ell+m)!}\binom{\ell - k + 2m}{m} \\ 
    &\leq \ex|\mathcal{T}_{n,p}| + \sum_{\ell = 0}^\infty\sum_{m=0}^\ell \frac{(np)^{\ell+m}}{(\ell+m)!}\left(\sum_{k=m}^\ell \binom{\ell-k+2m}{m}\right).
    \end{align*}
Writing $\binom{\ell-k+2m}{m} := f(k)$, we can bound the innermost sum with a geometric series. Since,
    $$
    \frac{f(k+1)}{f(k)} = \frac{(\ell-k+m)!}{(\ell-k+m)!}\frac{(\ell-k+2m-1)!}{(\ell-k+m-1)!} = \frac{\ell-k+m}{\ell-k+2m} \leq \frac{\ell}{\ell+m}
    $$
for all $k \geq m$, we have that
    \begin{align*}
    \sum_{k=m}^\ell \leq f(m)\sum_{j=0}^\infty \left(\frac{\ell}{\ell+m}\right)^j = f(m)\frac{1}{1-\frac{\ell}{\ell+m}} = f(m)\left(1 + \frac{\ell}{m}\right).
    \end{align*}
Splitting off the $m=0$ term from the original expression and applying this bound gives
    \begin{align*}
    \ex|\mathcal{T}_{n,p}|^2 &\leq \underbrace{\ex|\mathcal{T}_{n,p}|}_{I} + \underbrace{\sum_{\ell=0}^\infty \frac{(np)^\ell}{\ell!}2^\ell}_{II} + \underbrace{\sum_{\ell=0}^\infty\sum_{m=1}^\ell \frac{(np)^{\ell+m}}{(\ell+m)!}\binom{\ell+m}{m}\left(1+\frac{\ell}{m}\right)}_{III}.
    \end{align*}
Clearly $I = e^{np}$, $II = e^{2np}$, and $III$ can be bounded as,
    \begin{align*}
    III &\leq 2\sum_{\ell=0}^\infty\sum_{m=1}^\ell \frac{(np)^\ell}{\ell!}\frac{(np)^m}{m!}\frac{\ell}{m} \\
    &\leq 2e^{2np}\sum_{\ell=0}^\infty\sum_{m=1}^\ell \prob(X_n = \ell)\prob(Y_n = m) \frac{\ell}{m}\\\
    &\leq 2e^n\ex\left[ \frac{X_n}{Y_n}\1_{\{ 1 \leq Y_n \leq X_n \}} \right] \\
    &\leq 4e^{2n}\ex\left[ \frac{X_n}{Y_n+1} \right],
    \end{align*}
where $X_n$ and $Y_n$ are independent $\poisson(np)$ random variables. Finally, the value of the expected ratio given above is known to be $(1-e^{-np})$ (see, e.g., \cite{Coath2013}), so we can put everything together to get that
    $$
    \ex|\mathcal{T}_{n,p}|^2 \leq e^{np} + e^{2np} + 4e^{2np}\left(1-e^{-np}\right) \leq 5e^{2np} = 5(\ex|\mathcal{T}_{n,p}|)^2.
    $$
The second statement of the lemma is a quick corollary of the first. Since each of the trees are independent of one another, an application of (i) yields
$$
    \ex\left( \sum_{i=1}^m |\mathcal{T}_{n,p_i}|\right)^2 = \sum_{i=1}^m \ex|\mathcal{T}_{n,p_i}|^2 \leq 5\sum_{i=1}^m e^{2np_i}.
$$
Applying Chebyshev's inequality and upper bounding $e^{np_i} \leq \max_{1 \leq I \leq m} e^{np_i}$ gives
$$
\prob\left( \left|\sum_{i=1}^m |\mathcal{T}_{n,p_i}| - \mu \right| > \epsilon \mu \right) \leq \frac{5}{\epsilon^2}\frac{\left(\max_{1 \leq i \leq m}e^{np_i}\right)\sum_{i=1}^m e^{np_i}}{\left(\sum_{i=1}^m e^{np_i}\right)^2}~.
$$    
Factoring an $e^{np}$ (recall that $q_i = p-p_i$) from both the numerator and denominator completes the proof:
$$
\prob\left( \left|\sum_{i=1}^m |\mathcal{T}_{n,p_i}| - \mu \right| > \epsilon \mu \right) \leq \frac{5}{\epsilon^2}\frac{\max_{1 \leq i \leq m}e^{np}e^{-nq_i}}{e^{np}\sum_{i=1}^m e^{-nq_i}} = \frac{5}{\epsilon^2}\frac{\max_{1 \leq i \leq m}e^{-nq_i}}{\sum_{i=1}^m e^{-nq_i}}~.
$$
\qed

\subsection*{Proof of Lemma \ref{treeforest}}

Let $v \in I(L+1)$, and let $v_1,\ldots,v_n$ be the children of $p(v)$ in $T_n$. Take some sequence $(E_i)_{i \geq 1}$ such that
    $$
    S_i := S_{p(v),i} = \frac{E_i}{E_1 + \cdots + E_{n+1}}.
    $$
for all $1 \leq i \leq n+1$ where the collection $(S_{v,i} : v \in T_n , 1 \leq i \leq n+1)$ are the spacings from the uniform spacings coupling of $T_n$. The only dependence of the collection $(S_i : r(v)+1 \leq i \leq n+1)$ upon the collection $(S_1 , \ldots , S_{r(v)})$ comes from the existence of the random variables $E_1,\ldots,E_{r(v)}$ in the denominator. We define new spacings for $r(v) + 1 \leq i \leq n+r(v)+1$ that are independent of this collection,
    $$
    S_i^* = \frac{E_i}{E_{r(v)+1} + \cdots + E_{n+r(v)+1}}.
    $$
Now we construct a coupling of $\mathcal{F}(v)$ and a tree $\mathcal{T}_{new}$ that is distributed like $\mathcal{T}_{n,\ell_v^-(\epsilon)}$ from these spacings. Let $u_1,\ldots,u_n$ be $n$ vertices attached to a mutual root. We give these vertices labels
    $$
    \ell_{u_i}^* := \ell_v^-(\epsilon) - S_{r(v) + 1}^* - \cdots - S_{r(v) + k}^*.
    $$
Then, we attach $T_n(v_{r(v)+i})$ to the vertices $u_i$ for, making the vertex $u_i$ the root for $1 \leq i \leq n - r(v)$. For the vertices $u_i$ with $n < i \leq n+r(v)$ we make $u_i$ the root of independently sampled trees distributed like $T_n$. We call this new tree $T_{new}$, and upon deleting non-decreasing paths and vertices with negative labels (the negative label vertices can only exist in the first generation), we obtain a tree $\mathcal{T}_{new}$. Note that, since we are dealing with vertices of fixed finite index, the vertices $u_1,\ldots,u_k$ are in $\mathcal{T}_{new}$ and $v_1,\ldots,v_k$ are in $\mathcal{T}_{n,p}$ with probability tending to 1 as $n \to \infty$ for $k = \lfloor n^{1/4} \rfloor$. This is because the degree of a vertex is distributed like a $\bin(n,\ell_v)$ random variable and $\ell_v$ is of constant order when $v$ has a fixed finite index. Moreover, by the results on the uniform spacings coupling from Section 2, indeed, $\mathcal{T}_{new}$ is distributed like $\mathcal{T}_{n,\ell_v^-(\epsilon)}$. Finally, we note that by construction $\mathcal{T}_{new}$ is independent of $T_n(v)$ and of $T_n(p(v))$ when we condition on the value $\ell_v^-(\epsilon)$.

To complete the proof, we show that $|\mathcal{T}_{n,p}(v_i)| \leq |\mathcal{T}_{new}(u_i)|$ for $1 \leq i \leq k$ with sufficiently high probability and that
    \begin{equation}\label{conv}
    \frac{|\mathcal{F}(v)|}{\sum_{i=1}^k \mathcal{T}_{n,p}(v_i)} \convprob 1
    \ \text{ and } \
    \frac{|\mathcal{T}_{new}|}{\sum_{i=1}^k \mathcal{T}_{new}(u_i)} \convprob 1
    \end{equation}
as $n \to \infty$.

Since $\ell_v^-(\epsilon) \geq \ell_v$ it holds that for $1 \leq i \leq k$ that
    $$
    \ell_{v_i}-\ell_{v_i}^* \leq \left|\sum_{j=1}^i (S_{r(v)+j}-S_{r(v)+j}^*)\right|,
    $$
with probability tending to 1 as $n \to \infty$. Writing $R = \sum_{j=1}^{r(v)} E_{n+1+j} - \sum_{j=1}^{r(v)} E_j$ and $T = \sum_{i=1}^{n+1} E_i$ we can simplify the above to get
    $$
    \ell_{v_i}-\ell_{v_i}^* \leq \frac{\left(1+\frac{R}{T}\right)\sum_{j=1}^i E_{r(v)+j}}{\left(1 + \frac{R}{T}\right)T},
    $$
and so
    $$
    \sup_{1 \leq i \leq k} (\ell_{v_i}-\ell_{v_i}^*) \leq \frac{\frac{R}{T}\sum_{j=1}^k E_{r(v)+j}}{\left(1 + \frac{R}{T}\right)T} \leq \frac{\left(1+\frac{R}{T}\right)\sum_{j=1}^k E_{r(v)+j}}{T} \leq \frac{\left(1+\frac{R}{T^*}\right)\sum_{j=1}^k E_{r(v)+j}}{T^*}
    $$
with probability tending to 1 as $n \to \infty$, where $T \geq \sum_{i = r(v)+k+1}^{n+1} E_{i} := T^*$. Note that all the factors in the final upper bound are independent. Applying the law of large numbers we get that, for any $\delta > 0$,
    \begin{equation}\label{supbound}
    \sup_{1 \leq i \leq k} (\ell_{v_i}-\ell_{v_i}^*) \leq \frac{(1+\delta)x_nk}{n^2}
    \end{equation}
with probability tending to 1 as $n \to \infty$, where $x_n$ is any sequence that is $\omega_n(1)$ (since $R$ is a finite sum it won't tend to infinity). By our construction, the subtree $\mathcal{T}_{n,p}(v_i)$ can only be larger than the subtree $\mathcal{T}_{new}(u_i)$ if at the root the first tree has a child that the second did not, i.e., one of the $n$ uniforms sampled was in the interval $[\ell_{v_i}^*,\ell_{v_i}]$. Hence, by conditioning on (\ref{supbound}) and applying the union bound we get, 
    \begin{align*}
    \prob\left( \bigcup_{i=1}^k \{|\mathcal{T}_{n,p}(v_i)| \geq |\mathcal{T}_{new}(u_i)|\} \right) &\leq k\prob\left(\bin\left(n,\frac{(1+\delta)x_nk}{n^2}\right) > 0\right) + o_n(1) \\ 
    &\leq \frac{(1+\delta)k^2x_n}{n} + o_n(1) = o_n(1)
    \end{align*}
when one chooses $x_n$ accordingly. All that is left is to show (\ref{conv}). The proofs of both convergences can be done by an almost identical method as a consequence of the uniform spacings coupling so we only present the proof for the case of $|\mathcal{T}_{new}|$.

Clearly, to prove the convergence it is enough to show that
    \begin{equation}\label{finalconv}
    \frac{\sum_{i=k+1}^n |\mathcal{T}_{new}(u_i)|}{|\mathcal{T}_{new}|} \convprob 0
    \end{equation}
as $n \to \infty$, where $\mathcal{T}_{new}(v_i) = \emptyset$ if the vertex $v_i$ does not exist in the tree. Since $\ell_{v_i}^* \leq \ell_{v_k}^* := p_k$ (for this part of the proof define $p = \ell_v^-(\epsilon)$) for all $i \geq k$ we instead show a stronger result. We identify sequences $(a_n)_{n \geq 0}$, $(b_n)_{n \geq 0}$ with $a_n = o_n(b_n)$ such that, for i.i.d.\ $\mathcal{T}_{n,p_k}$ distributed trees, $\mathcal{T}_{n,p_k}^{(1)},\ldots,\mathcal{T}_{n,p_k}^{(n)}$,
    $$
    \prob\left(\sum_{i=1}^n |\mathcal{T}_{n,p_k}^{(i)}| > a_n \right) \convprob 0
    $$ 
as $n \to \infty$. We also observe that $\prob\left(|\mathcal{T}_{new}| \leq b_n \right) \convprob 0$ as $n \to \infty$. Combining the two results with the fact that the numerator in (\ref{finalconv}) is upper bounded by the first sum allows us to conclude (\ref{finalconv}). First, from a simple law of large numbers argument similar to the one above it holds that for any $0 < \delta < 1$,
    \begin{equation}\label{ineq}
    p_k \leq p - \frac{1-\delta}{n^{3/4}}
    \end{equation}
with probability tending to 1 as $n \to \infty$. Let $\gamma_n>0$ be some sequence to be specified later. Conditioning on (\ref{ineq}) and applying Markov's inequality, we have
    \begin{align*}
    \prob&\left( \sum_{i=1}^{n} |\mathcal{T}_{n,p_k}^{(i)}| > \exp(np - \gamma_n)\right) \\
    &\leq n\ex\left[\prob\left( |\mathcal{T}_{n,p-(1-\delta)/n^{3/4}}| > \frac{1}{n}\exp(np - \gamma_n) \Big| p \right)\right] + o_n(1) \\
    &\leq n\ex\left[ \frac{n\exp(np - (1-\delta)n^{1/4})}{\exp(np - \gamma_n)} \right] + o_n(1) \\
    &\leq n^2\exp(\gamma_n - (1-\delta)n^{1/4})~,
    \end{align*}
which converges to $0$ as $n \to \infty$ whenever $\gamma_n = o_n(n^{1/4})$. For $m = \lfloor n^{1/8} \rfloor$, $p_m = \ell_{v_m}^*$ one can show that
    $$
    p_m \geq p - \frac{1+\delta}{n^{7/8}}
    $$
with probability tending to 1 as $n \to \infty$. Since the labels of the vertices $v_1,\ldots,v_m$ are all larger than that of $v_m$ it holds that, from Lemma \ref{tightness} and the Chebyshev-Cantelli inequality,
    \begin{align*}
    \prob&\left( |\mathcal{T}_{new}| \leq \exp(np - \beta_n) \right) \\
    &\leq \prob\left( |\mathcal{T}_{n,p-(1+\delta)/n^{7/8}}| \leq \exp(np - \beta_n) \right)^m \\
    &\leq \ex\left[ \left( \frac{\var|\mathcal{T}_{n,p-(1+\delta)/n^{7/8}}|}{\var|\mathcal{T}_{n,p-(1+\delta)/n^{7/8}}| + \frac{1}{5}\var|\mathcal{T}_{n,p-(1+\delta)/n^{7/8}}|(1-e^{(1+\delta)n^{1/8}-\beta_n})^2} \right)^m \right],
    \end{align*}    
which also converges to 0 as $n \to \infty$ whenever $\beta_n = \omega_n(n^{1/8})$. Altogether, choosing a $\beta_n = n^{1/8+\alpha}$ and $\gamma_n = n^{1/4-\alpha}$ for sufficiently small $\alpha > 0$ is enough to complete the proof.
\qed

\subsection*{Proof of Lemma \ref{finalcomputation}}

Recall that by definition, $X_L = \sum_{|v| = L} e^{-nY_v}$, where the vertices are in $\mathcal{T}^*$. Each vertex $v$ has two children, say $v_1$ and $v_2$. These children have values $Y_{v_1} =Y_v + \frac{1}{n}E(1,v)$ and $Y_{v_2} \dist Y_v + \frac{1}{n}E(2,v)$, where $E(i,v) \dist \frac{E}{n}$ for any pair $(i,v)$ and are independent of all other edge labels in the graph. Since $nE(i,v) \dist E$ for all $(i,v)$, it holds that $U(i,v):= e^{-nE(i,v)} \dist \unif[0,1]$. Thus,
\begin{align*}
    \ex\left[ X_{L+1} | X_L \right] 
    &=\sum_{|v| = L}\sum_{i=1}^2 \ex\left[U(i,v)e^{-nY_v} | X_L\right] \\
    &= \sum_{|v| = L}\sum_{i=1}^2 \frac{1}{2}\ex[e^{nY_v}|X_L] \\
    &= \ex\left[ \sum_{|v| = L} e^{-nY_v} \Big| X_L \right] = X_L~.
\end{align*}
Hence, $X_L$ is a martingale with respect to $L$ with $\sup_L\ex[X_L] < \infty$ for any $n$, and so it has an almost sure limit. Call this limit $X$. 

Due to the structure of $\mathcal{T}^*$, going down one step in the tree reveals two copies of $\mathcal{T}^*$, both of which have an extra exponential from the first edge in all the vertex values. This structural recursion for the tree implies a distributional equality for the branching random walk:
$$X_L \dist U(X_{L-1} + X_{L-1}'),$$
where $U \dist \unif[0,1]$ and $X_{L-1}$ and $X_{L-1}'$ are two independent copies of $X_{L-1}$. From this, we obtain the distributional identity, $X \dist U(X' + X'')$ for $X'$ and $X''$ independent of each other.

Set $a_k := \ex[X^k]$. Using the distributional identity for $X$, we obtain the recursion $a_0 = 1$, $a_1 = \frac{1}{2}$,
$$
    a_k = \frac{1}{k+1}\sum_{i=0}^k\binom{k}{i}a_ia_{k-i}~.
$$
It is easily verified that $a_k = \frac{k!}{2^k}$ is a solution of this recursion. Thus, $\ex[X^k] = \frac{k!}{2^k}$ for all $k \geq 1$, which implies that $X \dist \frac{E}{2}$. The uniqueness of this distributional identity follows from noticing that the exponential distribution satisfies the Stieltjes moment problem conditions (\cite{Durrett2019}). This covers the first claim.

The second claim is a consequence of the Biggins-Hammersley-Kingman theorem (see, e.g., \cite{Addario2009}). For our purposes, the theorem implies that both the minimum and maximum value of all vertices in the $L$-th generation of a branching random walk with step size $(1\pm\epsilon)\frac{E}{n}$ is $\Theta(\frac{L}{n})$ as $L \to \infty$. More precisely, we have constants $C_1,C_2 > 0$ such that
    $$
    \prob\left( C_1L \leq \min_{1 \leq i \leq 2^L} nq_i \leq \max_{1 \leq i \leq 2^L} nq_i \leq C_2L \right) \to 1 \quad \text{as $L \to \infty$}.
    $$
If we take $\epsilon(L) = L^{-2}$, then $\min_{1 \leq i \leq 2^L} n\epsilon(L) q_i$ and $\max_{1 \leq i \leq 2^L} n\epsilon(L)q_i$ both converge to 0 in probability as $L \to \infty$, and so $\min_{1 \leq i \leq 2^L}\exp(\pm n\epsilon(L)q_i)$ and $\max_{1 \leq i \leq 2^L} \exp(\pm n\epsilon(L)q_i)$ both converge to 1 in probability as $L \to \infty$. Now, from the definition of the values $(q_i^\pm(\epsilon))_{i=1}^{2^L}$, we obtain the bounds
    \begin{align*}
    \min_{1 \leq i \leq 2^L} \exp(\mp n\epsilon q_i)X_L \leq X_{L}^\pm \leq \max_{1 \leq i \leq 2^L} \exp(\mp n\epsilon q_i)X_L~.  
    \end{align*}
From Slutsky's theorem and the first claim (i) it holds that both the upper and lower bounds above converge to $\frac{E}{2}$ as $L \to \infty$, and so the same holds for $X_L^{\pm}$.

For the final claim,  note that the aforementioned Biggins-Hammersley-Kingman theorem states that $\max_{1 \leq i \leq 2^L} \exp(-n q_i^\pm(\epsilon)) \to 0$ almost surely as $L \to \infty$. Since 
$$
\max_{1 \leq i \leq 2^L} e^{- n\epsilon q_i^\pm(\epsilon)} \leq 1,
$$
the convergence also holds in $L_1$. Now, let $\eta > 0$ and let $\epsilon(L)$ be as in the second claim. Splitting up the expectation in (iii) gives the upper bound
\begin{eqnarray*}
\lefteqn{    \ex\left[ \frac{\max_{1 \leq i \leq 2^L} \exp(-nq_i^\pm(\epsilon))}{\sum_{i=1}^{2^L} \exp(-nq_i^\pm(\epsilon))} \right] } \\
    &=& \ex\left[ \frac{\max_{1 \leq i \leq 2^L} \exp(-nq_i^\pm(\epsilon))}{\sum_{i=1}^{2^L} \exp(-nq_i^\pm(\epsilon))}\1_{\{\sum_{i=1}^{2^L} \exp(-nq_i^\pm(\epsilon)) < \eta\}} \right] \\
    &  & + \ex\left[ \frac{\max_{1 \leq i \leq 2^L} \exp(-nq_i)}{\sum_{i=1}^{2^L} \exp(-nq_i^\pm(\epsilon))}\1_{\{\sum_{i=1}^{2^L} \exp(-nq_i^\pm(\epsilon)) > \eta\}} \right] \\
    & \leq & \prob\left( \sum_{i=1}^{2^L} \exp(-nq_i^\pm(\epsilon)) < \eta \right) + \frac{1}{\eta}\ex\left[\max_{1 \leq i \leq 2^L}\exp(-nq_i^\pm(\epsilon)) \right].
\end{eqnarray*}
As $L \to \infty$ the final upper bound converges to $\prob\left(E < 2\eta\right)$. From here letting $\eta \downarrow 0$ completes the proof. To prove that 
    $$
    \ex\left[ \frac{\max_{1 \leq i \leq 2^L} \exp(-nq_i)}{\sum_{i=1}^{2^L} \exp(-nq_i)} \right] \to 0
    $$
as $L \to \infty$, it suffices to follow the same procedure as the $\pm$ case just covered above.
\qed

\setstretch{1.0}
%\bibliographystyle{apalike}
%\bibliography{ref}

\end{document}